\newcommand{\conv}{conv}
\newcommand{\mP}{\mathcal P}
\newcommand{\G}{\mathcal G}
\DeclareMathOperator{\dgen}{dgen}
\newtheorem{thm}{Theorem}
\newtheorem{defn}[thm]{Definition}
\newtheorem{example}[thm]{Example}
\newtheorem{proposition}[thm]{Proposition}
\newtheorem{condition}[thm]{Condition}
\newtheorem{lm}[thm]{Lemma}
\theoremstyle{definition}
\newtheorem{rmk}[thm]{Remark}
\begin{document}

\title{
	Statistical models for cores decomposition of an undirected random graph}

\author{  	Vishesh Karwa\thanks{Carnegie Mellon University and Harvard University, {\tt vishesh@cmu.edu}},
	Michael J. Pelsmajer\thanks{Illinois Institute of Technology, {\tt pelsmajer@iit.edu}},
	Sonja Petrovi\'c\thanks{Illinois Institute of Technology, {\tt sonja.petrovic@iit.edu}} ,
	Despina Stasi\thanks{Illinois Institute of Technology, {\tt stasdes@iit.edu}},
	Dane Wilburne\thanks{Illinois Institute of Technology, {\tt dwilburn@hawk.iit.edu}}
}\thanks{Authors are listed in alphabetical order; contributions are equal.}

\begin{abstract}
	The $k$-core decomposition is a widely studied
summary statistic that describes a graph's
global connectivity  
structure. In this paper, we move beyond using $k$-core decomposition as a 
  tool to summarize a graph and propose using $k$-core decomposition as a tool to model random graphs. 	
We propose using the shell distribution vector, a way of summarizing the decomposition, as a sufficient statistic for a family of exponential random graph models.
We study the properties and behavior of the model family, implement a Markov chain Monte Carlo algorithm for simulating graphs from the model, implement a direct sampler from the set of graphs with a given shell distribution, and explore the sampling distributions of some of the commonly used complementary statistics as good candidates for heuristic model fitting.
These algorithms  provide first fundamental steps necessary for solving the following problems:  parameter estimation in this ERGM,  extending the model to its Bayesian relative, and developing a rigorous methodology for testing goodness of fit of the model and model selection.
 The methods are applied to a synthetic network as well as the well-known Sampson monks dataset.
\end{abstract}
\maketitle

\section{Introduction}

Network analyses are often concerned---either directly or indirectly---with the degrees of the nodes in the network, a natural approach since counting the number of edges incident to a node gives a basic local measure of connectivity.
Several familiar statistical frameworks assign a probability distribution  to the set of  networks on a fixed number of nodes  based on their degree information,  e.g. \cite{holland1981exponential}, \cite{CDS11}, \cite{OW2012}, and \cite{rinaldo2013maximum}. However, despite the rich structure degree-based models offer compared to simpler  models such as Erd\"os-Renyi-Gilbert, they fail to capture
certain vital connectivity information about the network. In some applications, it matters not just to how many other nodes a particular node in the network is connected, but also to \emph{which} other nodes it is connected. For example, a node $v$ may seem important if it has high degree, but if all its neighbors are themselves unimportant due to having no additional connections (e.g., if they all have degree~1), then the ``influence'' or ``centrality'' of $v$ within the network is not actually all that impressive, after all.
  This distinction is especially crucial in applications concerning information dispersal as in \cite{PMAZM14}, the spread of infectious diseases or viruses as in \citet{kitsak2010identification}, or robustness to node failure. In the social network context, this importance can be interpreted as ``celebrity status'' of a node. Whereas degree-centric analyses are not well-suited to model such situations, the \emph{core decomposition} of a network graph can capture precisely this type of information.

Cores of a graph were introduced by \cite{Seidman83} to study tightly-knit groups in social networks. Since then, core decomposition 
has been used as a tool for numerous applications varying from understanding protein networks \citep{wuchty2005peeling}, visualization of large networks \citep{alvarez2006k}, and 	understanding the topology of the Internet graph \citep{CarmiPNAS07} to name a few.
In studies such as \cite{kitsak2010identification} and \cite{BK14}, the authors  identify spreader nodes and rank them  in terms of their spreading influence, using a graph's core decomposition. Methods for identifying spreaders using cores were extended to dynamic networks in \cite{MD10} and core decomposition in general was extended to weighted networks in \cite{EA13}. An important feature of a core decomposition is that it can be computed efficiently (see, e.g., \cite{LRATJZ13}), even for ``uncertain graphs'' which are graphs whose edges have some probability of existing--such graphs have applications in biological networks that model, for instance, protein interactions (see \cite{BGKV14}). Although core decomposition has become an important and widely used tool as a descriptive summary statistic of the network, it is a statistic for which there does not exist an associated statistical model.

The goal of this paper is to place the core decomposition of a network on a rigorous statistical foundation and present it as a tool for statistical modeling rather than descriptive analysis. We construct a natural model based on core decomposition by embedding the core structure of a graph in the family of exponential random graph models (ERGMs) and describe its theoretical properties. We restrict the support of the model to allow only networks with a fixed \emph{degeneracy} to have a positive probability. We show that this eliminates certain bad properties common to many ERGMs and expect that such support restrictions may help improve the properties of other ERGMs as well. We study three common inference tasks  as they apply to the support restricted ERGM: sampling, maximum likelihood estimation, and goodness-of-fit testing. More specifically, the contributions of this paper are as follows:
\begin{enumerate}
	\item In Section \ref{sec:prelim}, we summarize the core decomposition of a network in the form of a \emph{shell distribution}, and in Section \ref{sec:distributionModel} we introduce a \emph{support restricted} exponential random graph model with the shell distribution as a sufficient statistic. 
		\item In Section \ref{sec:mcmc}, we perform simulation studies to understand the behavior of the model by relying on an MCMC algorithm to sample from the model and to estimate the parameters of the model.
		\item In Section \ref{sec:samplingSequence}, we present an
		algorithm to sample from the space of graphs given a fixed shell distribution.
	\item We return to the theoretical properties of the model in Sections~\ref{sec:structural} and~\ref{sec:distributionMLE}, where we study the
	space of graphs with a fixed shell distribution and describe the \emph{marginal polytope} associated with the model and conditions for the existence of MLE, respectively.
\end{enumerate}

ERGMs provide a natural framework to model networks through their sufficient statistics; see \cite{robins2007introduction} for an introduction. \cite{F-review} provide a comprehensive review of various ways to model networks, including ERGMs.
 ERGMs are a special case of the venerable class of exponential families which are known to possess excellent statistical properties; see \cite{BR:86} for a theoretical treatment of exponential families and \cite{RFZ:mle09} in particular for discrete exponential family models, including ERGMs. ERGMs have been the workhorse of many applied studies, and the literature is too vast to be surveyed here; see~\cite{snijders2006new,saul2007exploring} and \cite{goodreau2009birds} for examples of studies that use ERGMs for network modeling.

Our goal is to add to the toolbox of ERGMs the ability to model the core structure of a graph.
Doing so has two important consequences: First, it puts the core structure of a graph, summarized by its shell distribution, on a firm statistical footing. Second, it  allows us to understand what properties of a network  are captured by the shell distribution.
	 It is worth noting that any ERGM based on a core decomposition  \emph{cannot} be specialized to the Erd\"os-R\'enyi model, i.e.,  the Erd\"os-R\'enyi model is not a submodel of any ERGM based on the core decomposition. 
 In fact, the same is true for any ERGM with sufficient statistics based on the degree sequence of the network. As such, the shell distribution ERGM would occupy a unique space in the network literature. Models based on the core distribution go beyond the dyadic independence assumption inherent in the degree sequence based network models and are able to capture transitivity effects. These models differ from the ERGM-based subgraph counts, such as triangles and stars, which also go beyond the dyadic independence assumption. This is because the core structure of a network is a \emph{global} sufficient statistic in the following sense: To which core a node belongs depends in some way on the entire network; see Section \ref{sec:prelim} for the precise definition of a core and some examples. In contrast, subgraph counts measure local and coarse properties of the network.

We want to point out that for all the good properties of ERGMs, they are not without drawbacks. Recent empirical and theoretical work has brought to light some undesirable properties of some special classes of ERGMs; these properties are often termed as ``model degeneracy'' 
 (\cite{RFZ:mle09,schweinberger2011instability, chatterjee2013estimating, hunter2008goodness}) or ``inconsistency'' (\cite{shalizi2013consistency}). As noted in \cite{RFZ:mle09}, ``model degeneracy'' is an umbrella term used to denote many undesirable properties of ERGMs.  One specific drawback to note is that it may be difficult to sample efficiently~(\cite{ergmsHard15}), but that is an issue for ERGMs in general and outside the scope of this paper.
 We discuss these issues in Section \ref{sec:appendixDegenerate} and explain  how we fix them by placing support restrictions on  the class of models that we consider. Since the word degeneracy also refers to a graph-theoretic notion which is relevant to this work, we avoid the use of the term ``model degeneracy'' and instead use the term ``bad'' behavior of the model.

\section{Technical preliminaries: cores and shells}
\label{sec:prelim}
We restrict our analysis to the set of \emph{simple} graphs, representing networks without multiple edges and self-loops. For the remainder of this manuscript, let $\G_n$ denote the set of all simple graphs on $n$ nodes. We are interested in distributions over the set $\G_n$; thus $G$ will denote a random variable with state space $\G_n$,  and $G=g$ its realization. We will also consider families of subsets of $\mathcal G_n$ below.
\begin{defn}[\cite{Seidman83}]\rm
\label{defn:k-core}
 The \emph{k-core} of a graph $g$, denoted by  $H_k(g)$ or simply $H_k$ if the graph is clear from the context, is the maximal subgraph in which every vertex has degree at least $k$\footnote{This is the usual definition of the $k$-core and it appropriately describes the notion of node importance and robust degree. Seidman's original definition also requires the subgraph to be connected.}.
\end{defn}
As it is often useful to think of the $k$-core as the output of an algorithm for which the graph $g$ is the input, we also use the equivalent algorithmic definition: $H_k$ is the subgraph obtained by iteratively deleting vertices of degree less than $k$; see Algorithm~\ref{alg:GetShellSeq}.
For example, for the particular graph $G=g$ on the left of Figure~\ref{fig:core-example},
$H_0(g)$ is just the graph itself, $H_1$ is $g$ without the isolated vertex, the $2$-core $H_2$ is shown in the middle,
and 
$H_3$ and $H_4$ are the same graph, shown on the right.  For $k\ge5$, $H_k$  is the empty graph.

\begin{figure}[h]
\begin{tikzpicture}
  [scale=.2,auto=center,every node/.style={circle,fill=black},inner sep=1.5pt]
  \node (n1) at (-9,0)  {}; %L
  \node (n2) at (5.5,0)  {};
  \node (n3) at (4,3)  {};
  \node (n4) at (8.5,0)  {};
  \node (n5) at (10,3)  {};
  \node (n6) at (5.5,6)  {};
  \node (n7) at (0,0)  {}; %L
  \node (n8) at (-6,0)  {}; %L
  \node (n9) at (-3,2)  {}; %L
  \node (n10) at (0,6)  {}; %M
  \node (n11) at (-2.121312,8.12132)  {}; %M
  \node (n12) at (-3,6)  {};
  \node (n13) at (0,9)  {};
  \node (n14) at (-2,3.5)  {};
  \node (n15) at (8.5,6)  {};
  \node (n16) at (-1.5,-2.5)  {}; %L
  \node (n17) at (-4.5,-2.5)  {}; %L
  \foreach \from/\to/\weight in {
  n15/n3/1, n15/n4/1, n15/n5/1, n15/n6/1, n2/n6/1, n3/n6/1, n5/n6/1,
  n2/n5/1, n2/n3/1, n3/n4/1, n2/n4/1, n3/n5/1, n4/n5/1,
  n3/n10/1,n10/n11/1,n10/n12/1,n10/n13/1,
  n7/n3/1,
  n1/n8/1, n7/n8/1, n7/n17/1,  n8/n9/1,n7/n9/1,  n7/n16/1,  n8/n17/1, n16/n17/1,
  n16/n2/1 }
    \draw (\from) --(\to);
\end{tikzpicture}
\quad\quad\quad\quad\quad
\begin{tikzpicture}
  [scale=.2,auto=center,every node/.style={circle,fill=black},inner sep=1.5pt]
  \node (n2) at (5.5,0)  {};
  \node (n3) at (4,3)  {};
  \node (n4) at (8.5,0)  {};
  \node (n5) at (10,3)  {};
  \node (n6) at (5.5,6)  {};
  \node (n7) at (0,0)  {}; %L
  \node (n8) at (-6,0)  {}; %L
  \node (n9) at (-3,2)  {}; %L
  \node (n15) at (8.5,6)  {};
  \node (n16) at (-1.5,-2.5)  {}; %L
  \node (n17) at (-4.5,-2.5)  {}; %L
  \foreach \from/\to/\weight in {
  n15/n3/1, n15/n4/1, n15/n5/1, n15/n6/1, n2/n6/1, n3/n6/1, n5/n6/1,
  n2/n5/1, n2/n3/1, n3/n4/1, n2/n4/1, n3/n5/1, n4/n5/1,
  n7/n3/1,
  n7/n8/1, n7/n17/1,  n8/n9/1, n7/n9/1,  n7/n16/1,  n8/n17/1, n16/n17/1,
  n16/n2/1 }
    \draw (\from) --(\to);
\end{tikzpicture}
\quad\quad\quad\quad\quad
\begin{tikzpicture}
  [scale=.2,auto=center,every node/.style={circle,fill=black},inner sep=1.5pt]
      \path [use as bounding box,red] (4,-2.5) rectangle (9,8.5);
  \node (n2) at (5.5,0)  {};
  \node (n3) at (4,3)  {};
  \node (n4) at (8.5,0)  {};
  \node (n5) at (10,3)  {};
  \node (n6) at (5.5,6)  {};
  \node (n15) at (8.5,6)  {};
  \foreach \from/\to/\weight in {
  n15/n3/1, n15/n4/1, n15/n5/1, n15/n6/1, n2/n6/1, n3/n6/1, n5/n6/1,
  n2/n5/1, n2/n3/1, n3/n4/1, n2/n4/1, n3/n5/1, n4/n5/1 }
    \draw (\from) --(\to);
\end{tikzpicture}		\caption{A small graph $g$ (left), its $2$-core (center), and its 3- and 4-core (right).}
		\label{fig:core-example}
\end{figure}
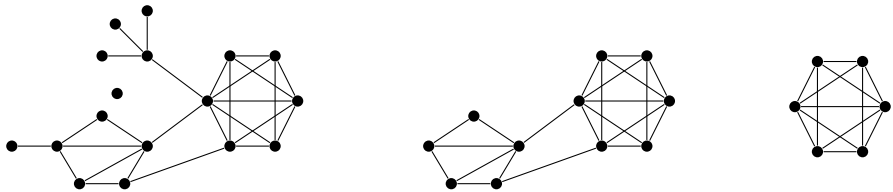

Each node is contained in several $k$-cores, for every $k$ from $0$ to whatever the largest $k$ is for that node.  Thus, the following node statistic captures all core information for a node.
\begin{defn}\label{defn:shell-index}\rm
A vertex $v$ in a graph $g$ has \emph{shell index i} if $v\in H_i(g)$ but $v\notin H_{i+1}(g)$.
Define $s_g:V\rightarrow\mathbb{N}$ as the function that maps vertices of $g$ to the non-negative integers according to their shell indices, so that if $v$ has shell index $i$ we may write $s_g(v)=i$.  If the graph $g$ is clear from the context, we drop the subscript and simply write $s(v)=i$.
\end{defn}
In other words, the shell index of a vertex $v$ indicates the highest core to which $v$ belongs.
For example, not all nodes in the $2$-core $H_2(g)$ in the middle of Figure~\ref{fig:core-example} have shell index $2$ in $g$: the six nodes on the right have shell index $4$.
The vertex set $V(g)$ of any network $g$ can be partitioned  according to the shell indices, since the shell index exists, is well-defined and is unique for all vertices.
There are two natural ways to record all of the shell index information about a network, and hence, record the information that captures its core structure.
First,
the \emph{shell sequence} $s(g)$ of an $n$-vertex graph $g$
 with vertices $v_1,\ldots,v_n$ is a vector of length $n$ whose $i^{th}$ entry is the shell index of vertex $v_i$.
  Second,
if the interest is in unlabeled graphs (i.e., exchangeable models for labeled graphs),  it is natural to summarize the sequence with a histogram as follows.
The \emph{shell distribution $n_S(g)$} of an $n$-vertex graph $g$ is a vector of length $n$ whose $j^{th}$ entry $n_j(g)$ is the number of vertices of $g$ that have shell index $j$, for $0\leq j\leq n-1$.  (The shell index of a vertex is bounded above by its degree, which is bounded above by $n-1$.)  Note that  $\sum_{j=0}^{n-1} n_{j}(g)=n$.
  In symbols,
 \[
 	n_S(g):=(n_0(g),n_1(g),\ldots,n_{n-1}(g)),
 \]
where \mbox{$ n_j(g)=\left|\{v\in V(g): s(v)=j,\, 0\le j\le n-1 \}\right|.$}
For example, the graphs in Figures~\ref{fig:SameDegree-a} and \ref{fig:SameDegree-b} both have shell distribution $(0,8,0,0,0,0,0,0)$.
The graphs in Figures~\ref{fig:SameDegree-c} and \ref{fig:SameDegree-d} have shell distributions $(0,0,8,0,0,0,0,0)$ and $(0,0,4,4,0,0,0,0)$, respectively.  These graphs illustrate the fact that the degree and core structures of a graph are not obtainable from one another.
Graph $g$ of Figure~\ref{fig:core-example} has shell distribution $(1,5,5,0,6,0,0,0,0,0,0,0,0,0)$.
\begin{figure}[h]
\begin{subfigure}[b]{.21\textwidth}
\centering
\begin{tikzpicture}
  [scale=.2,auto=center,every node/.style={circle,fill=black},inner sep=1.5pt]
  \path[use as bounding box] (0,-2) rectangle (12,6);
  \node (n1) at (0,0)  {};
  \node (n2) at (4,0)  {};
  \node (n3) at (8,0)  {};
  \node (n4) at (12,0)  {};
   \node (n5) at (0,4)  {};
  \node (n6) at (4,4)  {};
  \node (n7) at (8,4)  {};
  \node (n8) at (12,4)  {};
  \foreach \from/\to/\weight in {n1/n2/1,n2/n3/1,n3/n4/1,n2/n6/1,n4/n8/1,n5/n6/1,n6/n7/1}
    \draw (\from) --(\to);
 \end{tikzpicture}
  \vspace{0.45in}
\subcaption{\label{fig:SameDegree-a}
Vertices have degrees $1$, $2$, and $3$.}
 \end{subfigure}
 \quad
 \begin{subfigure}[b]{.21\textwidth}
\centering
\begin{tikzpicture}
  [scale=.2,auto=center,every node/.style={circle,fill=black},inner sep=1.5pt]
  \path[use as bounding box] (0,-2) rectangle (12,6);
  \node (n1) at (0,0)  {};
  \node (n2) at (4,0)  {};
  \node (n3) at (8,0)  {};
  \node (n4) at (12,0)  {};
   \node (n5) at (0,4)  {};
  \node (n6) at (4,4)  {};
  \node (n7) at (8,4)  {};
  \node (n8) at (12,4)  {};
  \foreach \from/\to/\weight in {n1/n2/1,n3/n4/1,n5/n6/1,n7/n8/1}
    \draw (\from) --(\to);
\end{tikzpicture}
 \vspace{0.45in}
 \subcaption{\label{fig:SameDegree-b}
 All eight 
 vertices have degree $1$.}
\end{subfigure}
\quad
\begin{subfigure}[b]{.21\textwidth}
\centering
\begin{tikzpicture}
  [scale=.2,auto=center,every node/.style={circle,fill=black},inner sep=1.5pt]
  \path[use as bounding box] (0,-4) rectangle (12,6);
  \node (n1) at (0,0)  {};
  \node (n2) at (4,0)  {};
  \node (n3) at (8,0)  {};
  \node (n4) at (12,0)  {};
   \node (n5) at (0,4)  {};
  \node (n6) at (4,4)  {};
  \node (n7) at (8,4)  {};
  \node (n8) at (12,4)  {};
  \foreach \from/\to/\weight in {n1/n2/1,n3/n4/1,n5/n6/1,n7/n8/1,n1/n5/1,n4/n8/1,n2/n6/1,n3/n7/1,n1/n6/1,n3/n8/1}
    \draw (\from) --(\to);
 \end{tikzpicture}
\subcaption{\label{fig:SameDegree-c}
  All vertices belong to the $0$-core, $1$-core and $2$-core. Higher cores are empty.}
 \end{subfigure}
 \quad
 \begin{subfigure}[b]{.21\textwidth}
\centering
\begin{tikzpicture}
  [scale=.2,auto=center,every node/.style={circle,fill=black},inner sep=1.5pt]
  \path[use as bounding box] (0,-4) rectangle (12,6);
  \node (n1) at (0,0)  {};
  \node (n2) at (4,0)  {};
  \node (n3) at (8,0)  {};
  \node (n4) at (12,0)  {};
   \node (n5) at (0,4)  {};
  \node (n6) at (4,4)  {};
  \node (n7) at (8,4)  {};
  \node (n8) at (12,4)  {};
  \foreach \from/\to/\weight in {n1/n2/1,n3/n4/1,n5/n6/1,n7/n8/1,n1/n5/1,n4/n8/1,n2/n6/1,n3/n7/1,n1/n6/1,n2/n5/1}
    \draw (\from) --(\to);
\end{tikzpicture}
 \subcaption{\label{fig:SameDegree-d}
 All vertices are in $k$-core for $k=0,1,2$, but $4$ of the vertices are also in the $3$-core.}
\end{subfigure}
\caption{\label{fig:SameDegree}
The graphs in (a) and (b) have the same core structure but different degree structure. The graphs in (c) and (d) have the same degree structure but different core structure.}
\end{figure}
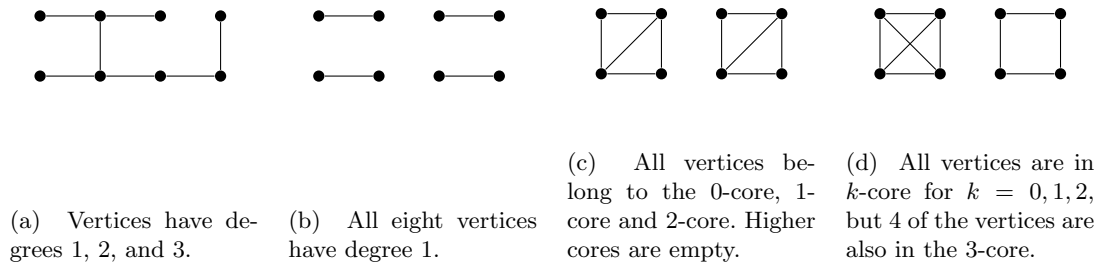

Finally, the \emph{degeneracy} of a graph $g\in \G_n$, denoted by $\dgen(g)$, is the index of the largest nonzero entry in the shell distribution vector $n_S(g)$. In other words, the degeneracy of a graph is the maximum index of a non-empty shell.
Thus we may define the following
subset of the set of simple $n$-vertex graphs $\G_n$:
\[\G_{n,m} =\{g\in\G_n: \dgen(g) = m\}.\]

\section{The shell distribution ERGM}
\label{sec:distributionModel}

A natural way to model random graphs using their core structure is to embed summaries of their core structure
 in the exponential random graph model (ERGM) framework.
  In what follows, we define a family of ERGMs   using  one such summary, namely the shell distribution, as a sufficient statistic.

Let $G=g$ be an instance of a random graph from the set $\G_n$. Partitioning  the vertex set of $g$ according to the shell indices implies that the probability of observing $g$ is

\begin{align}
P(G=g;p)=(\varphi(p))^{-1}\prod_{j=0}^{n-1}p_j^{n_j(g)},\label{eqn:graphprob}
\end{align}
where
$p_j\in(0,1)$ is the parameter that represents the propensity of shell $j$ to have vertices in it,
$p=(p_0, p_1, \ldots, p_{n-1})$ is the parameter vector,  integers $n_j(g)$ are the components of the shell distribution vector $n_S(g)$ as defined above, and $\varphi(p)$ is the partition function.
[One may also think of $p_j$ as representing the \emph{attractiveness} of shell $j$.]
Note that a feature of the model is that there is no dyad independence assumption.
Equation~\eqref{eqn:graphprob} is a most direct way to define an ERGM based on the shell distribution.
One can easily see that it can be written in exponential family form (see Appendix~\ref{sec:OldModel}) and allow us to take advantage of various good properties of exponential families.

It turns out, however, that specification \eqref{eqn:graphprob} of the model has many undesirable properties, common to other ERGMs \citep{RFZ:mle09}; details are given in  Appendix~\ref{sec:OldModel}.
There are several ways to avoid these issues that arise from specifying the model as in Equation~\eqref{eqn:graphprob}; one such way is to add an additional parameter to the model as follows.
We restrict the support of the model to the set $\G_{n,m}$  of all simple graphs whose degeneracy is
 equal to $m$. 

\begin{align}\label{eqn:multinomialModel}
P(G=g;p,m) =
	\begin{cases}
		(\varphi(p))^{-1}\prod_{j=0}^{m}p_j^{n_j(g)} &\mbox{if }g \in \mathcal G_{n,m},\\
		0 &\mbox{otherwise,}
	\end{cases}
\end{align}
where
$$\varphi(p) = \sum_{g \in \mathcal G_{n,m}} \prod_{j=0}^{m}p_j^{n_j(g)} $$
is the normalizing constant (partition function).
Equation~\eqref{eqn:multinomialModel} defines a multinomial-like distribution over the partition of nodes induced by the shell distribution.
By limiting degeneracy, the model has a significantly reduced  number of parameters, which offers an additional advantage in estimation over the more general model.

For each fixed value $m$ of  
 degeneracy, the model defined by Equation~\eqref{eqn:multinomialModel}  is an ERGM supported on the subset of graphs $\mathcal G_{n,m}$.
We have thus defined a  family of models  with  parameters $p$ and $m$, where $p=(p_0,\dots,p_m) \in \Delta_{m+1}$ and $m \in \{0,\ldots, n-1\}$.  It is a union of ERGMs, one for each distinct value of $m$.

For the remainder of the paper, this support restriction is assumed to be present and made implicit, unless otherwise mentioned, to ease notation. The dimension of the parameter space is $m+1$ and is a function of the parameter $m$.

\begin{rmk} \label{rmk:degen}
In this paper, we will treat $m$ as fixed and known. When fitting the model to real networks, $m$ will be selected by setting it equal to the degeneracy of the observed graph, assuming the sample size $N=1$ as is most common in applications.
Estimating $m$ and fitting  the shell ERGM when $N>1$ and the observed graphs have distinct degeneracy values is an open question.
The choice of fixing $m$ rather than treating it as an estimable parameter is both reasonable and warranted. The degeneracy of a graph is an important metric that describes its sparsity and is easily calculable from the data. If the degeneracy is not fixed, the large majority of our parameters will not be estimable as the observed graphs are expected to be sparse (real networks usually are), with observed degeneracy much smaller than $N$, see also \ref{subsec:estimation}.
Moreover, simulations show that allowing $m$ to be different from 
 the observed degeneracy leads to a poorly behaved model,  as explained in Section \ref{sec:appendixDegenerate}. Intuitively, having $p_i>0$ for large shell indices $i$ ensure that large-index shells  attract most nodes.
\end{rmk}

In order to express this model in exponential family form, define the set of natural parameters $\theta_i = \log (p_i/p_{m})$. Note that by definition, $\theta_{m} = 0$,  so there are $m$ linearly independent parameters; we  will thus denote by
$\theta = (\theta_0, \ldots, \theta_{m-1})$  the vector of natural parameters. The shell distribution ERGM can now be written in the following form:

\begin{align}
P(G=g)=\exp\left\{\sum_{j=0}^{m-1}n_j(g)\theta_j-\psi(\theta)\right\},
\end{align}
where $\psi(\theta)$ is the log-partition function (or the log normalizing constant),  given by
\begin{align}\label{eq:shellERGM}
\psi(\theta) = \log \sum_{g \in \mathcal G_{n,m}} \exp\left\{\sum_{j=0}^{m-1}n_j(g)\theta_j\right\}.
\end{align}

The $m$-truncated shell distribution
 $( n_0(g), \ldots, n_{m-1}(g) )$ is a minimal sufficient statistic of the model. The natural parameter space is
\begin{align}
\Theta = \{ \theta \in \mathbb R^m: \psi(\theta) < \infty\} =\mathbb R^m.
\end{align}
 Given this model specification, the overarching objective is to use it to perform statistical inference.  
  However, as is usually the case for ERGMs, evaluating the log-partition function above is intractable for any reasonably sized $N$.  This will affect the computation of the maximum likelihood estimator (MLE), requiring one to resort to MCMC methods, as well as testing model fit.   In the remainder of this paper, we study three important aspects of these problems.  First, both  MLE computation and model fitting depend on our ability to sample from the model with a given parameter value. To this end, we provide an MCMC algorithm for sampling from the model, summarize the results of several simulations, and provide an interpretation of the model parameters and the sampling distribution of realizable graph shell structures.  Second, from the theory of exponential families, we know that the MLE is unique if it exists. But the question of existence is not often easy to address; we solve it here for the shell distribution model.
Finally, testing model fit necessitates the ability to sample from the \emph{fibers} of the model, that is, the subspaces of $\G_n$ with given fixed values of the shell distribution.  We provide an algorithm for performing this task. We begin with theoretical considerations, then proceed to simulation results.

\subsection{Sample space restriction and degeneracy of real-world networks} 

In ERGMs, sample space restriction leads to an improvement in the properties of the conditional model and estimation algorithms, as shown in  \cite{snijders2002conditional}, \cite{snijders2002markov}. A usual approach is to condition on the degree sequence,  maximum degree, or degree distribution, etc. In contrast, we are conditioning on the observed degeneracy of the graph. This is more robust than conditioning  on the degree, as we are  allowing the degrees to be somewhat free but still controlling sparsity in another way. 

Degeneracy of real networks tends to be small relative to the number of nodes. A table illustrating this for the undirected graphs from the \href{ http://vlado.fmf.uni-lj.si/pub/networks/data/}{Pajek} collection of datasets \citep{pajek} is included  below. 
\begin{table}[h]
	\centering
	\begin{tabular}{|c|c|c|c|>{\centering\arraybackslash}p{80mm}|}\hline
		Network Dataset & \#Nodes&\#Edges & Degen.&Shell Distribution\\ \hline
		Scotland  & 244&256&4&$(16,26,183,7,12)$\\ \hline
		Geom & 7343&11898&21&$(1185,2218,1714,1023,503,248,122,126, \newline 34,27,20,52,0,1,7,14,17,0,0,0,0,22)$\\ \hline
		NDyeast &2114&2277&5& $(244,1199,478,169,18,6)$\\\hline
		NetScience &1589&2742&19&$(128,320,390,281,223,89,21,60,27,30,0,0,0,0,0,0,0,0,0,20)$\\\hline
		USpowerGrid&4941&6594&5&$(0,1588,3122,195,24,12)$\\\hline
		Erd\H{o}s&6927&11850&10&$(0,4780,954,466,258,179,113,73,49,17,38)$\\\hline
	\end{tabular}
\end{table}
\noindent

Observe  that the degeneracy of the graph is allowed to grow as the number of nodes grows, but is expected to be significantly smaller than $n$ in real-world networks. 

\smallskip 
\section{Inference and implementation of the shell distribution ERGM}
\label{sec:mcmc}

\label{sec:MCMCSampling}

Many inference problems associated with ERGMs require generating random samples from the model at a fixed parameter value. In particular, problems such as computing an MLE using Monte Carlo methods (\citet{snijders2002markov}), sampling from the posterior distribution of the parameters (\cite{caimo2011bayesian}) and exploring the space of graphs that have high probability under the model each require random samples from the model. In this section, we present a commonly used MCMC algorithm to sample graphs from the shell distribution ERGM and use this algorithm to obtain maximum likelihood estimates and to understand 
the properties of random graphs that arise from the shell distribution  ERGM. 

\paragraph{Sampling from the shell distribution ERGM:}As is the case with most ERGMs, sampling from the shell distribution ERGM is intractable and we need to resort to Markov chain Monte Carlo (MCMC) schemes. We use a Metropolis-Hastings algorithm  with a tie-no-tie proposal (see \cite{caimo2011bayesian}) to generate graphs from the model. At each iteration, the algorithm proposes a graph $g'$ from the current state $g$ and decides to accept it with probability
 \begin{align}
 \label{mhprob}
 	\min\left(1,\frac{P(g')\cdot P(g'\rightarrow g)}{P(g)\cdot P(g\rightarrow g')}\right)
 = \min \left( 1,\prod_i p_i^{n_i(g')-n_i(g)}\cdot \frac{P(g'\rightarrow g)}{P(g\rightarrow g')}\right),
\end{align}
where $\{g \rightarrow g'\}$ denotes the event that the Markov chain moves from $g$ to $g'$. Note that when the proposed graph $g'$ has degeneracy not equal to $m$, by definition of the model, $P(g')=0$, hence the acceptance probability is $0$. 

A simple proposal distribution that is commonly used for proposing new graphs in the Metropolis framework is to randomly select a dyad and swap it.
 However, during experiments, we found that this leads to Markov chains with poor mixing properties. Instead, we use a 
 ``tie-no-tie'' (TNT) proposal, also used in \cite{caimo2011bayesian}.
 At each iteration, the TNT proposal randomly chooses between the set of edges and non-edges, and then swaps   a randomly chosen dyad within the selected set.
But  this proposal is not symmetric: Let $\pi$ be the probability of choosing the set of edges, $ne(g)$ be the number of non-edges in $g$ and $e(g)$ be the number of edges in $g$. Then the Hastings ratio $\frac{P(g'\rightarrow g)}{P(g\rightarrow g')}$ is determined  as follows:
\begin{align}
\frac{P(g' \rightarrow g)}{P(g \rightarrow g')} = \begin{cases} 
\frac{\pi}{1 - \pi}\frac{ne(g)}{e(g)+1}, & \mbox{if } g' \mbox{ is obtained from } g \mbox{ by adding an edge} \\ 
\frac{1 - \pi}{\pi}\frac{e(g)}{ne(g)+1}, & \mbox{if } g' \mbox{ is obtained from } g \mbox{ by removing an edge.} 
\end{cases}
\end{align}
	
\begin{rmk}
	Computing the acceptance probability using equation \ref{mhprob} requires one to compute the 
	so-called vector of ``change statistics'' $\{n_i(g') - n_i(g)\}, i = 1, \ldots, n$ at each step, see \cite{hunter2006inference}. For many existing ERGMs, the change statistics can be computed locally, i.e without resorting to computing the sufficient statistics for proposed network $g'$. However, this is not the case for the shell distribution  as it is a \emph{global} sufficient statistic. In order to compute the change statistics, we need to recompute the shell distribution for the proposed network $g'$ at each step of the Markov chain. This increases the computational complexity of the algorithm, even though one can compute the shell distribution in linear time.\end{rmk}
\subsection{Estimating the parameters of the shell distribution ERGM:}
\label{subsec:estimation}
A natural starting point to estimate parameter values $\theta$ and $m$ using a real network is by either (a) using their observed counterparts, (b) by using a maximum likelihood estimate. We will discuss these two estimating methods for both $\theta$ and $m$.  Estimation of $m$ is tricky, as it represents the model dimension, and we observe only one graph. Also for any observed graph, allowing $m$ to be different from the observed degeneracy leads to many undesirable properties of the resulting model. We explain this issue at length in Section \ref{sec:appendixDegenerate}. Thus for simulation studies based on real networks we fix $m$ to be the observed degeneracy.

Estimation of $\theta$ is more involved. One can estimate $\theta$ naively by using the empirical shell distribution and setting $\hat{\theta}_j = n_j/n$, or one can use a more principled likelihood-based estimator (such as an MLE or a Bayes estimate). It turns out that using the observed shell distribution as an empirical estimate leads to a poor (or uninteresting) parameter estimate - in particular, networks sampled from the empirical estimate do not resemble the observed network. Namely, the model puts most of  
its  mass on graphs with all nodes in the largest possible shell (see also Sections~\ref{equal} and~\ref{sec:Sampson}).  
On the other hand, computing an MLE of $\theta$ from the observed network is intractable due to the  normalizing constant $\psi(\theta)$ given in Equation~\eqref{eq:shellERGM}. Maximizing the likelihood requires  the repeated use of Markov Chain Monte Carlo sampling, as described below, see also \cite{hunter2006inference} and references therein. Bayesian estimates are also intractable due to two normalizing constants, see \cite{caimo2011bayesian} for more details.

We use Markov chain Monte Carlo MLE (\cite{geyer1992constrained,snijders2002markov}) to estimate $\theta$. For $t=0,1, \ldots$, let  $\theta^t$ be the parameter estimate at iteration $t$. We  estimate the ratio of the intractable normalizing constant $\frac{\psi(\theta)}{\psi(\theta^t)}$ using samples from $\theta^t$ obtained by the Markov chain algorithm presented earlier. Specifically, let $g_1, \ldots, g_B$ be a random sample from the model $\theta^t$, then 

$$\frac{\psi(\theta)}{\psi(\theta^t)}  \approxeq \frac{1}{B}\sum_{b=1}^B{\exp\left\{(\theta-\theta^t)n_S(g_b)\right\}}.$$
Then, $\theta^{t+1}$ is estimated by maximizing the estimated log-likelihood, given by 
$$\hat{l}(\theta,\theta^t) = (\theta-\theta^t)n_S(g_{obs}) - \log \frac{\psi(\theta)}{\psi(\theta^t)}  $$
and the process is repeated until convergence, see \cite{hunter2006inference} for more details. 

Estimation of the normalizing constant requires a good initial value $\theta_0$ (\cite{hunter2006inference}). We use a heuristic grid search to obtain a good starting point that is close to the MLE, where closeness to the MLE is evaluated by checking if the empirical version of the following moment equation holds: 

$$E_{\hat{\theta}}[n_s(g)] = n_s(g_{obs}),$$ where $g_{obs}$ is the observed graph and $\hat{\theta}$ is an MLE.

The behavior of the MCMC-MLE estimator depends on the choice of a good starting point $\theta_0$. For the current simulations, we use a heuristic starting point, but one could also consider the step length algorithm in \cite{hummel2012improving} to find a good starting point close to the MLE. 

\paragraph{What do graphs from the shell distribution ERGM look like?} 
We use the MCMC algorithm described above to explore the structure of random graphs generated by the model for fixed and estimated parameter values. In particular, for a fixed choice of parameters $\theta$ and $m$ of the shell distribution ERGM, we explore the space of graphs that have high probability mass under the model by sampling a large number of graphs $\{g_b\}_{b=1}^B$ using the MCMC algorithm. We use these samples to find out what features of any given network can be captured by modeling its 
core structure through the shell distribution ERGM. 
In the simulation studies below, we employ two types of parameter values to simulate graphs - known fixed parameters and parameters estimated from a real-world network. For the known parameters, we always use degeneracy $m=3$. Parameter estimates based on real world networks are obtained using a combination of a heuristic grid search (to initialize the MCMC MLE algorithm) and MCMC MLE. 
To explore the sampled space of graphs, we summarize the distribution of the sampled graphs $\{g_b\}$ by using several summary statistics: boxplots of the degree distribution and shell distributions, and histograms of number of edges, two stars, and triangles, centrality, size of largest shell and size of the innermost shell. When the parameters are estimated using a real world network, we also compare the distribution of these summary statistics with the corresponding observed statistic. It may be tempting to use this comparison as a way to assess the goodness of fit of the model, however, one must exercise caution:  

\begin{rmk}
	It is important to note that comparing the sampling distribution of summary statistics with the observed values is not a formal goodness-of-fit test of the model, but instead a heuristic approach to evaluate how well the model fits the  data. It follows along the lines the goodness-of-fit testing proposed for more general ERGMs in \cite{hunter2008goodness}.
	Ideally, one should be able to either derive the asymptotic distribution of any test statistic or, since in this case we usually observe a single network, perform an exact test. However, doing so requires several important steps, foremost, a good choice of a test statistic that can play the role of a generalized goodness-of-fit statistic. In case of, say, hierarchical log-linear models for contingency tables, one can use the chi-square statistic, and sample from the conditional distribution given the observed sufficient statistic to approximate the exact distribution of $\chi^2$. In case of this ERGM, however, we do not have at our disposal such a statistic that can reliably `measure' the distance of the observed network from the expected network. The main obstacle is that the dyads are not independent in this model,  unlike the case of hierarchical models in which  cells in the  contingency table (arising from the incidence matrix) are independent.
	To this end, we follow the generally used strategies for ERGMs and report the sampling distributions of various complementary network statistics, such as the number of edges and the number of triangles. For completeness, we explore the distribution of these statistics when conditioning on the sufficient statistics in Section~\ref{sec:structural}.
\end{rmk}
\subsection{Example 1: Various fixed Shell probabilities }\label{equal}
In this section, we study the properties of the shell distribution ERGM by simulating graphs from various fixed parameters. We set $m=3$, $n=18$ and consider two models: 
\begin{enumerate}
	\item Equal attractiveness, i.e., $p_i  = \frac{1}{4}$ for all $i$; % \forall i$; 
	\item Decaying attractiveness, i.e, $p_i \propto e^{-i}$ for all $i$.   
\end{enumerate}

\begin{figure}[t]
	%\vspace{.3in}
	\begin{minipage}[b]{0.44\textwidth}
		\centering
		\includegraphics[scale=0.42]{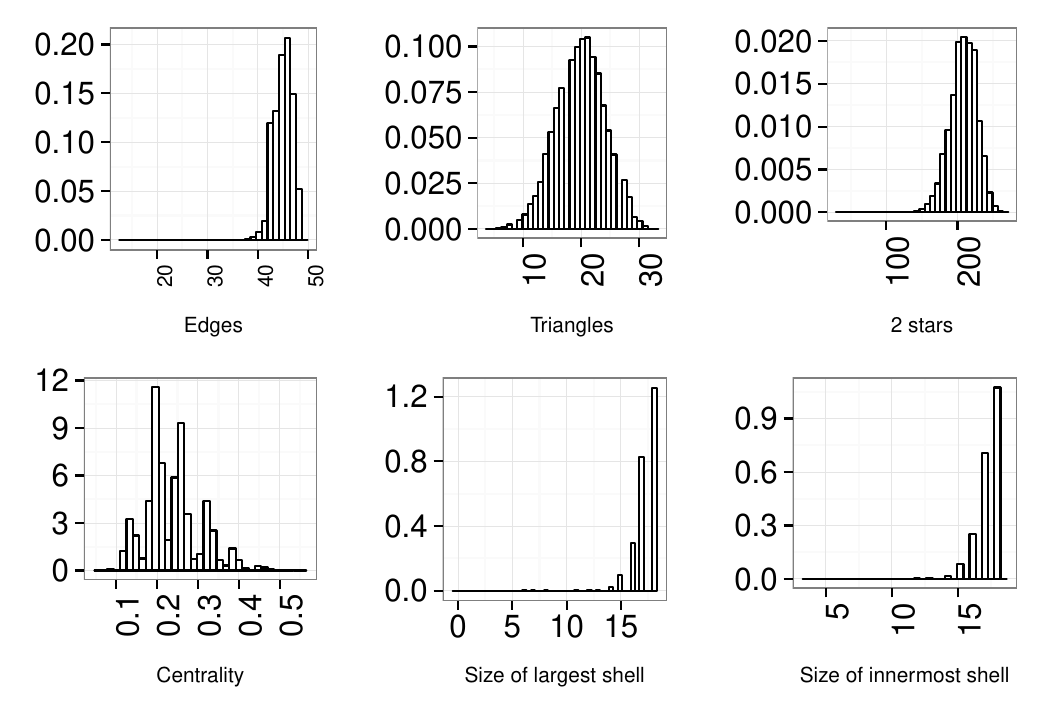}
		\caption{Sampling distributions of summary statistics from the \emph{Equal Attractiveness} model}\label{fig:equal}
	\end{minipage}
	%\end{figure}
	%\begin{figure}[t]
	\quad\quad\quad\quad
	\begin{minipage}[b]{0.43\textwidth}
		\includegraphics[width=\textwidth]{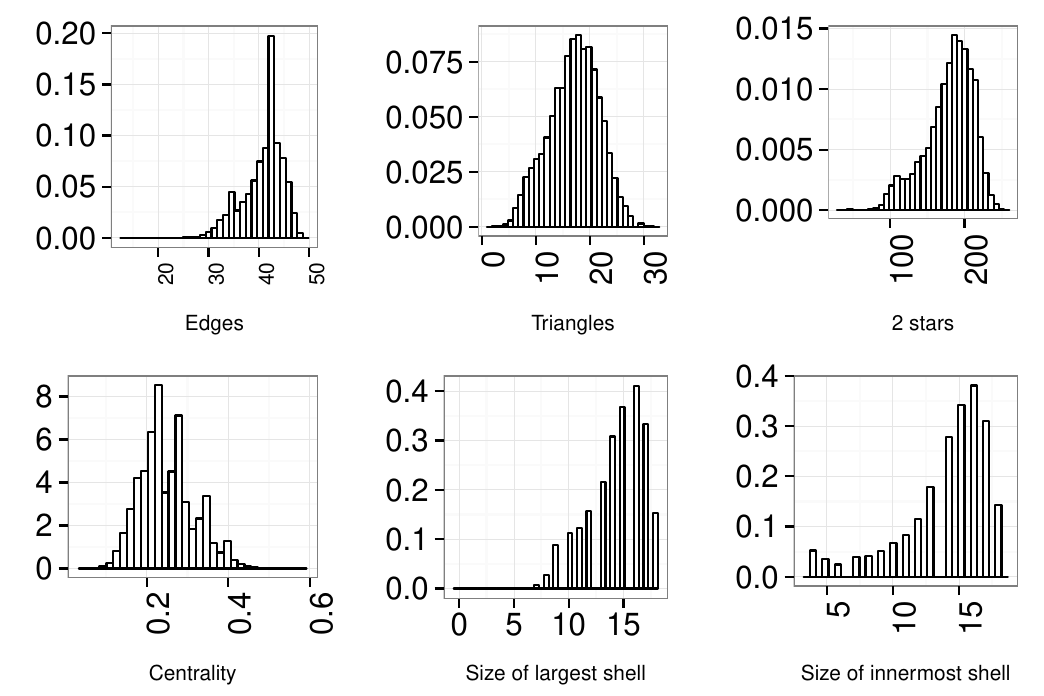}
		\caption{
			Sampling distributions of summary statistics from the \emph{Decaying Attractiveness} model }\label{fig:decay}
	\end{minipage}
	%\vspace{.3in}
\end{figure}

\emph{Equal attractiveness Model:} This model posits that every shell has equal attractiveness, i.e. $p_i = \frac{1}{4}$ for all $i$ and since $\theta_i=\log\frac{p_i}{p_m}$, it follows that $\theta = (0,0,0,0)$. 
Hence by definition, this model places a uniform mass over the set of all $3$-degenerate graphs. The sampling distribution of various summary statistics of graphs sampled from this model are shown in  Figure~\ref{fig:equal}. Note that even though the model posits that every shell has equal attractiveness a priori, the sampled graphs are such that most nodes tend to lie in the innermost shell which is shell $3$ in this case. This can be seen by the histogram of the size of the innermost shell in Figure \ref{fig:equal}.  There are at least three reasons for this behavior, the first one  related to the very definition of the shell index. Namely, the existence of higher-index shells in a graph  requires a certain minimum  number of nodes in it, and hence, a priori, higher shells have higher levels of natural ``attractiveness'', to which we refer as intrinsic graph-theoretic attractiveness. In this sense, the innermost shell is always the most attractive. 
Secondly, the model puts a uniform distribution on the space of all graphs, not on the space of all shell distributions. For example consider the $4$-truncated shell distributions $(0,0,0,18)$ and $(18,0,0,0)$: there are many  graphs realizing the former, yet exactly one graph realizing the latter, namely the empty graph. 
 Thus, the sampling distribution of the shell distributions is non-uniform. Finally, there is also an issue with the slow mixing of the Markov chain. Shell distributions with a large number of nodes in the higher-indexed shells are ``stable'' in the sense that adding or removing a single edge tends to leave the shell distribution unchanged. On the other hand, when most nodes are in lower index shells, adding or removing a few edges lead to large changes in the shell distribution.

 It is worth noting that the second and the third issue above are, in fact, related to each other and also to an issue  that arises naturally in  ERGMs in general.  Namely, ERGMs model random graphs,  not sufficient statistics, thus  a uniform distribution over the set of graphs is not a uniform distribution over the set of sufficient statistics one may care about. This is made evident by the current example: a uniform distribution over $3$-degenerate graphs induces a non-uniform distribution on the graph statistics such as number of triangles, number of edges, and $2$-stars.

\emph{Decaying Attractiveness Model:} The decaying attractiveness model posits that the attractiveness of each shell decays exponentially with its index, i.e. $p_i = c{e^{-i}}$, where $c$ is some constant. This model aims to overcome the problems imposed by the intrinsic graph-theoretic attractiveness of  the higher-index shells. Figure \ref{fig:decay} shows the sampling distributions of summary statistics of the samples from this model. The histogram of the size of the innermost shell has two modes, one at $16$ and a second one at $4$, suggesting a bimodal distribution. The histograms of number of two stars and the number of triangles are bimodal as well. 
	
\subsection{Example 2: Sampson monastery data}\label{sec:Sampson}

The Sampson dataset is a widely studied network of size $18$ that records interactions among a group of monks in a New England Monastery \cite{sampson1968novitiate} and their evolution over time. 
The first three time periods of the original Sampson data are commonly used (e.g., in the {\tt ergm} package) and often aggregated. The network at any of these three time periods, makes for an uninteresting second example from the point of view of shells: namely 
all nodes are in the same shell and of degeneracy~3 and we have already considered such networks in Section~\ref{equal}.  The aggregate network over the three time periods also has just about all nodes (all but 4) in the highest shell and of degeneracy 5. In order to obtain a more varied shell distribution as a case study to examine the model behavior,
we consider instead an arbitrary subgraph of the aggregate network; specifically, we use the upper triangular part of the adjacency matrix and symmetrize it. 
This undirected network is shown in Figure \ref{fig:sampsonNetwork}, color-coded by shells; it has $n = 18$ nodes, $e = 35$ edges and density of $0.23$. The observed degeneracy is $3$ and the observed $4$-truncated shell distribution is $(0,2,3,13)$; there are $3$ nonempty shells, and  
the innermost shell (shell $3$) contains the highest number of nodes ($13$).

To use this Sampson-derived network to  study the properties of the shell distribution ERGM, we set $m=3$ and use MCMC MLE to estimate the value of $\theta$. 
 Using a heuristic grid search, we found $\theta_0 = (2,1,1,0)$ to be a good initial estimate. The estimated MLE is $\hat{\theta}_{MLE} = (-7.95,  2.79,  0.91,0)$ which corresponds to $\hat{p}_{MLE}  = (0.00, 0.82, 0.13, 0.05)$.  Recall that $\theta_i = \log \frac{p_i}{p_m}$ and hence $\theta_i$ can be interpreted as the log-odds of attractiveness of shell $i$ relative to shell $m$. For this dataset, attractiveness  of shell $1$ relative to shell $3$ is almost $3$ times that of shell $2$, thus indicating that the network has a  rich periphery in the sense of \cite{RPFM14}. This can also be seen by noting that $\hat{p}_1 = 0.82$; recall that the $p_i$ can also be interpreted as the propensity of the $i$-th shell to have nodes in it beyond its intrinsic graph-theoretic attractiveness (as explained in  Section~\ref{equal}).

Next, using $m=3$ and the MLE estimate  $\theta = (-7.95,2.79,0.91,0)$, we simulated networks from the model using the MCMC algorithm presented earlier in this Section to study what properties of the network are captured by the model. One can think of these sampled graphs as samples from the posterior predictive distribution. Convergence of a 40,000-step Markov chain was verified using the usual diagnostics, such as trace plots and autocorrelation plots to ensure sufficient mixing. Figures \ref{fig:sampsonHistogram}, \ref{label-a}, \ref{label-b} summarize the results of the simulations.

\begin{figure}[t]
	%\vspace{.3in}
	\begin{minipage}[b]{0.42\textwidth}
		\centering
		\includegraphics[scale=0.42]{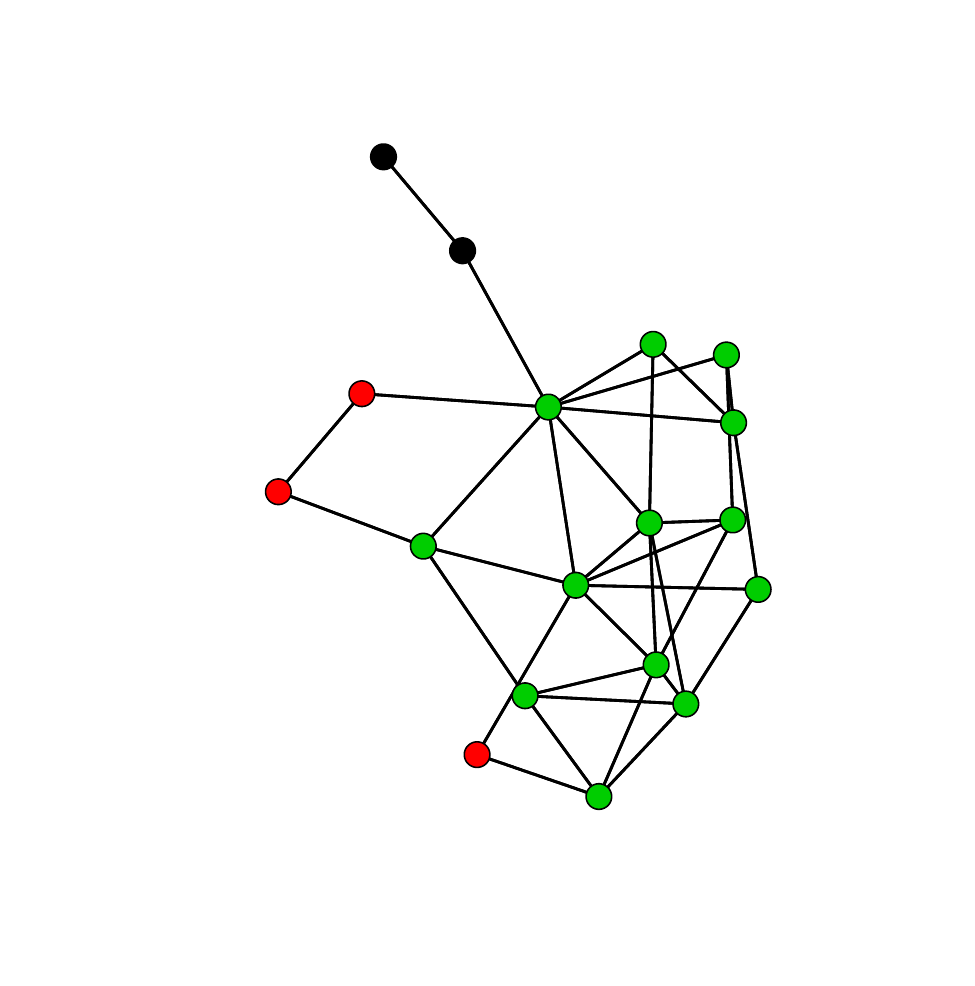}
		\caption{A subset of the Sampson Monastery Dataset: Nodes are colored according to their shell index: black is $1$, red is $2$, and green is shell index $3$.}\label{fig:sampsonNetwork}
	\end{minipage}
	%\end{figure}
	%\begin{figure}[t]
	\quad\quad\quad\quad
	\begin{minipage}[b]{0.42\textwidth}
		\includegraphics[width=\textwidth]{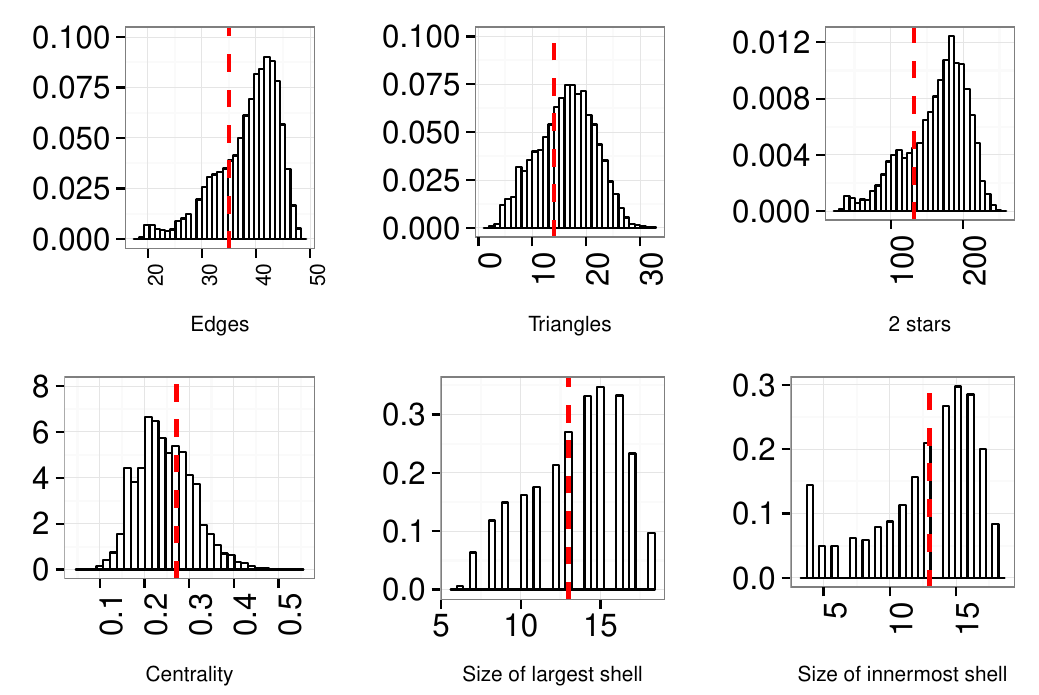}
		\caption{
			Sampling distribution of summary statistics from the model estimated from the  dataset in Figure~\ref{fig:sampsonNetwork}. The red dashed lines indicate the observed values of the statistics.}\label{fig:sampsonHistogram}
	\end{minipage}
	%\vspace{.3in}
\end{figure}

\begin{figure*}
\centering
\centering
\begin{minipage}[b]{0.40\textwidth}
\includegraphics[width=\textwidth]{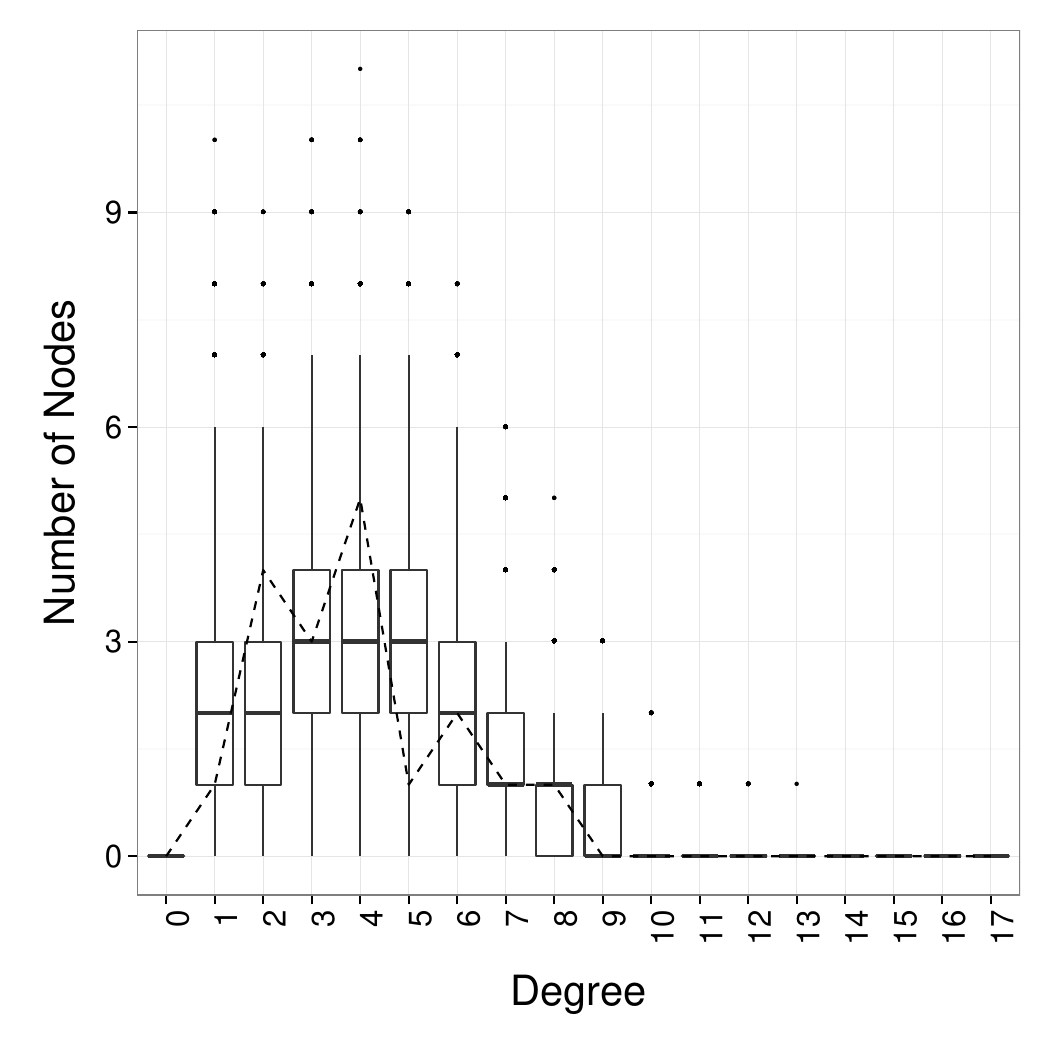}
\caption{Degree Distribution}\label{label-a}
\end{minipage}\qquad
\begin{minipage}[b]{0.40\textwidth}
\includegraphics[width=\textwidth]{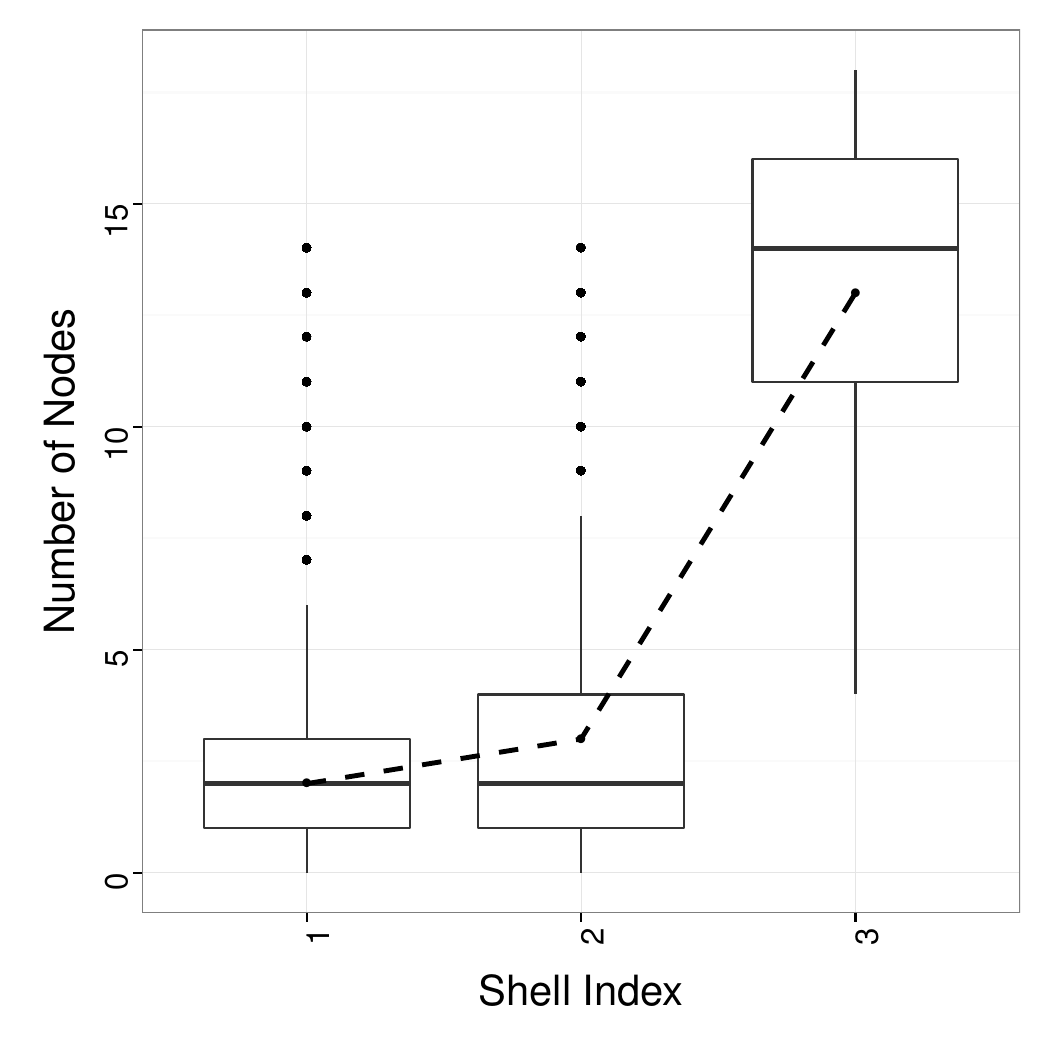}
\caption{Shell Distribution}\label{label-b}
\end{minipage}
\caption{Box plots of degree distributions and shell distributions for the shell distribution model estimated from Sampson data. The dashed lines represent the observed  distributions.}
\end{figure*}
Specifically, Figure \ref{fig:sampsonHistogram} shows  the sampling distribution of various summary statistics in the form of a histogram and compares them with the observed values. Several interesting results emerge.  The sampling distribution of the summary statistics are all unimodal and very close to the observed statistic shown by the red line. Notice that the histogram of triangles is centered around the observed value, thus the shell distribution model captures triadic effects quite well, at least in this small example. We would like to draw a comparison with degree-based models which do not capture triadic effects, by definition. It is widely believed that the centrality of a network is related to its core distribution, and the histogram of centrality provides additional support of this hypothesis. The distribution of the size of the largest shell is also captured by the model. However, the sampling distribution of number of edges suggests that the observed number of edges is much smaller than what we expect under the model. This may be due to the fact that the model has a bias towards graphs with 
 higher-index shells (innermost cores), and these shells tend to be densely connected. A similar situation is true for the number of two-stars. The sampling distribution of the size of the innermost shell indicates that it can have anywhere from $5$ to $18$ nodes, with two modes at $15$ and $16$; compare this with the observed number of $13$ nodes in shell $3$. We also consider various shell distributions visited by the Markov chain. The top 10 most frequently visited shell distributions are given in Table \ref{tab:top10}. 
 \begin{table}
 \begin{center}
 	\caption{The top 10 visited shell distributions}
 	\label{tab:top10}	
 	\begin{tabular}{|c|l|}
 		\hline
   Shell Distribution & Density (in $\%$) \\
   \hline
0.1.1.16	&	5.95	\\
0.0.1.17	&	5.79	\\
0.1.2.15	&	5.22	\\
0.0.2.16	&	4.54	\\
0.2.1.15	&	4.22	\\
0.0.0.18	&	3.88	\\
0.1.3.14	&	3.64	\\
0.2.2.14	&	3.63	\\
0.1.0.17	&	3.54	\\
0.0.3.15	&	2.89	\\

 		\hline 
 	\end{tabular} 
 \end{center}
 \end{table}

Figures \ref{label-a} and \ref{label-b}  show the box plots of  degree and  shell distributions, respectively, of the sampled graphs, and include
 the observed degree and shell distributions as dotted lines.  Note that the sampling distribution of  degree distributions is quite different from that of  shell distributions, showing that the shell distribution model captures features that go beyond the degrees, and justifying our initial motivation for constructing the model.
 In addition, the sampling distribution of the shell distribution is concentrated around the observed shell distribution. This is to be expected: as we used the observed shell distribution to estimate the model, it serves as a check that the MLE of $\theta$ using MCMC MLE is indeed a good estimate. Recall that another definition of the MLE is the following: If $\hat{\theta}$ is an MLE, then, $E_{\hat{\theta}}[n_s(g)] = n_s(g_{obs})$. Figure \ref{label-b} serves as a visual confirmation of this equation. In fact, the observed shell distribution is $n_s(g_{obs}) = (0,2,3,13)$ and the estimate of the expected shell distribution (based on the MCMC samples from $\hat{\theta}$) turns out to be $\hat{E}_{\hat{\theta}}[n_s(g)] = (0.00,  2.29,  3.06, 12.66)$. 
 Finally, even though the  general trend in the observed degree distribution is captured by the model, as suggested by Figure \ref{label-a}, there is a substantial deviation between the observed degree distribution and the one suggested by the model. This reinforces the observation that the degree distribution and shell distribution capture different aspects of the Sampson network, and the shell distribution ERGM captures properties of the network beyond the degrees. In fact, it is well-known that degree-based models have independent dyads, whereas the shell distribution ERGM does not. This is further evidenced by Figure \ref{fig:sampsonHistogram}.

\section{A sampling algorithm for generating graphs with a given shell distribution} 
\label{sec:samplingSequence}

In the last two decades, there have been several contributions in the graph theory and computer science literature on computing cores decompositions. Given the wide-ranging application of cores, a natural problem that arises is to find an algorithm that randomly generates graphs with a given core structure.  Such an algorithm is presented in \cite{BGGKW07} for graphs with additional restrictions on the number of edges between pairs of shells.

This section provides a simple algorithm (Algorithm~\ref{alg:ShellSeq}) for sampling the space of graphs with a given shell distribution (sometimes called the \emph{fiber} of that distribution), such that any graph has positive probability of being constructed (Theorem~\ref{thm:PosProbEveryG}).  This is an independent sampler, not a Markov chain.
Simulations indicate good performance in terms of discovering new graphs at a fast pace. While the true sampling distribution is not known, our experiments show that reasonably long runs will give good estimates.
{\scriptsize
\begin{algorithm}[h]
\LinesNumbered
\DontPrintSemicolon
\SetAlgoLined
\SetKwInOut{Input}{input}
\SetKwInOut{Output}{output}
\Input{a graph $g$}
\Output{its shell sequence $s(g)=(s_1,\ldots,s_n)$}
\BlankLine
Initialize $s^*=0$.\;
Repeatedly remove vertices of degree at most $s^*$ in $g$,
incrementing $s^*$ by 1 if no eligible vertices remain in $g$; quit when $g$ is empty.
The shell index of each vertex is the value of $s^*$ when it was deleted.
\caption{Compute Shell Sequence}
\label{alg:GetShellSeq}
\end{algorithm}
}

For convenience, we restate the basic algorithm for producing the shell sequence of a graph as Algorithm~\ref{alg:GetShellSeq}.  There is no need to implement it, since the linear-time algorithm from \cite{BZ-coresDecompAlgo2003}
 is already implemented as
the {\tt graph.coreness} function from the \cite{igraph} {\tt igraph} package in {\tt R}.

Note that the order in which the vertices of $g$ are deleted in Algorithm~\ref{alg:GetShellSeq} is neither unique, nor arbitrary: vertices are deleted in increasing order of their shell indices, but not all vertices with the same shell index are interchangeable. For example, consider the graph in Figure~\ref{fig:SameDegree-a}, for which every vertex has shell index equal to 1: the first vertex deleted will be, by necessity, one of the vertices of degree~1, but the second vertex deleted can vary depending on the choice of the first vertex.

Our sampling algorithm will generate graphs with vertices in an order that is compatible with Algorithm~\ref{alg:GetShellSeq}, so we will need to know more about such orderings. To that end, we give a simple condition for a graph $g$ on vertices  $\{v_1,\ldots,v_n\}$ that determines whether Algorithm~\ref{alg:GetShellSeq} could potentially process its vertices in that order, yielding a pre-specified sorted shell sequence $s_1\le\ldots\le s_n$.
\begin{condition}\label{c:sorted sequence}\rm
For all $i\in[n]$:
\begin{enumerate}
\item $v_i$ has at least $s_i$ neighbors $v_j$ with $s_j\ge s_i$, and
\item $v_i$ has at most $s_i$ neighbors $v_j$ with $j>i$.
\end{enumerate}
\end{condition}

\begin{lm}\label{L:sorted sequence}
Consider any graph $g\in G_n$ on vertices labeled $v_1,\ldots,v_n$ and
sorted sequence of $n$ non-negative integers $s_1\le\ldots\le s_n$.
Algorithm~\ref{alg:GetShellSeq} can process the vertices of $g$ in the given order,
yielding shell indices $s(v_i)=s_i$ for all $i\in[n]$,
if and only if $g$ satisfies Condition~\ref{c:sorted sequence}.
\end{lm}

\begin{proof}
Consider Algorithm~\ref{alg:GetShellSeq} on a graph $g$ satisfying Condition~\ref{c:sorted sequence},
at the moment when $s^*$ increments from $s-1$ to $s$.  The subgraph induced by $\{v_i:s_i\ge s\}$ has
minimum degree at least $s$ by Condition~\ref{c:sorted sequence}(i), so none of those vertices can have
been deleted yet.  On the other hand, if $v_i$ is the vertex remaining in $g$ with smallest index $i$, then
$v_i$ must have at least $s$ neighbors $v_j$ with $j>i$, so by Condition~\ref{c:sorted sequence}(ii),
$s_i\ge s$.  Thus, the vertices remaining in $g$ at that moment are precisely those $v_i$ with $s_i\ge s$.
Applying the argument for any $s$ and for $s+1$ shows that the vertices $v_i$ with $s_i=s$ are precisely
those which Algorithm~\ref{alg:GetShellSeq} deletes when $s^*=s$, as required.

For the other direction, suppose that Algorithm~\ref{alg:GetShellSeq} processes the vertices of $g$ in order, yielding $s(v_i)=s_i$ for all $i\in[n]$.
Then Condition~\ref{c:sorted sequence}(ii) is true since $s^*=s_i$ when $v_i$ is deleted.
Suppose that Condition~\ref{c:sorted sequence}(i) is not true for some $v_i$.
Just before $s^*$ increments from $s_i-1$ to $s_i$, all vertices $v_j$ with $s_j<s_i$ have been deleted, so $v_i$
has fewer than $s_i$ neighbors remaining.  Then $v_i$ could be deleted, which would make its shell index  $s_i-1$
according to the algorithm, a contradiction.
\end{proof}

Given a sorted shell sequence $s_1,\ldots,s_n$ of some simple graph, we initially aim to construct a graph $g$ in $n$ steps, by adding edges from
$v_i$ to $v_j$ with $j>i$ during Step~$i$ so that Condition~\ref{c:sorted sequence} is satisfied.  At Step~$i$, we will need to know how many neighbors $v_i$ already has with shell index at least $s_i$---call this number $t_i$.  Then Condition~\ref{c:sorted sequence} can be restated
as follows:
$v_i$ has between $s_i-t_i$ and $s_i$ new neighbors added during Step~$i$, where
 $t_i=|\{v_j:v_jv_i\in g,\ j<i,\ s_j\ge s_i\}|$.
These considerations are summarized in Algorithm~\ref{a:proto-generator}.
{\scriptsize
\begin{algorithm}[h]
\LinesNumbered
\DontPrintSemicolon
\SetAlgoLined
\SetKwInOut{Input}{input}
\SetKwInOut{Output}{output}
\Input{a sequence of non-negative integers $s_1\le\ldots\le s_n$}
\Output{a graph $g$ on vertices $v_1,\ldots,v_n$ with shell sequence $s(g)=(s_1,\ldots,s_n)$}
\BlankLine
\For{$i \leftarrow 1$ \KwTo $n$}{
    Make $v_i$ adjacent to a set $S$ of vertices $v_j$ with $j>i$ such that $s_i-t_i\le |S| \le s_i$\;
    Update $t_j$ values as needed.
}
\caption{Graph sampler: initial version}
\label{a:proto-generator}
\end{algorithm}
}

However, Algorithm~\ref{a:proto-generator} could get stuck if it is unable to choose $S$ as required.
This problem will not happen as long as the number of vertices $v_j$ with $i<j\le n$ is at least $s_i-t_i$.
For Steps~$i\le n-s_n$, the number of such vertices $n-i$ satisfies $n-i \ge s_n \ge s_i \ge s_i-t_i$,
so the problem can only occur for $i>n-s_n$.  To avoid this,
we will modify those steps of the algorithm.

Consider $i\ge n-s_n$.  Since the number of vertices $v_j$ with $j>i$ is $n-i\le s_n$ and $s_i=s_n$,
the condition $s_i-t_i\le |S| \le s_i$ reduces to just $|S|\ge s_n-t_i$.  The number of vertices
in $\{v_j: j\ge n-s_n\}$ is $s_n+1$, including $v_i$, so $v_i$ has $s_n$ potential neighbors in that set.  Thus, for such $i$,
Condition~\ref{c:sorted sequence} is equivalent to Condition~\ref{c:sorted sequence alternative ending}, which is as follows:

\begin{condition}\label{c:sorted sequence alternative ending}\rm
For all $i\in[n]$ with $i\ge n-s_n$,
$v_i$ has at most $t_i$ non-neighbors in the set $\{v_j:n-s_n\le j\le n\}$.
\end{condition}

As we process vertices $v_i$ with $i\ge n-s_n$, let $t_i'$ represent
the maximum number of non-neighbors allowed among unprocessed vertices.
Initialize $t_i'=t_i$ for all $n-s_n\le i\le n$.
To satisfy Condition~\ref{c:sorted sequence alternative ending},
each $t_j'$ decreases by 1 whenever it is not made adjacent to the currently
active vertex $v_i$.
When a $t_j'$ reaches zero, we make it adjacent to all remaining vertices and
then remove $v_j$ from further consideration;
note that this does not change $t_i'$ for any $i\not=j$. 
Since no $t_i'$ will ever go below zero,
we will be able to process all $v_i$ with $i\ge n-s_n$ so that
Condition~\ref{c:sorted sequence alternative ending} is satisfied.

Finally, recall that $t_i=|\{v_j:v_jv_i\in g,\ j<i,\ s_j\ge s_i\}|$.
Since the given sequence $s_1,\ldots,s_n$ is sorted
in increasing order, $s_j>s_i$ is impossible when $j<i$.  Thus, an equivalent definition of $t_i$ is:
\begin{equation}
t_i = |\{v_j: v_jv_i\in g,\ j<i,\ s_j=s_i\}|.
\end{equation}

Algorithm~\ref{alg:ShellSeq} constructs graphs within the restrictions permitted by
Condition~\ref{c:sorted sequence} (for $i<n-s_n$) and Condition~\ref{c:sorted sequence alternative ending} (for $i\ge n-s_n$),
choosing randomly among all possibilities whenever there is more than one option.
As we have shown, the algorithm will never get stuck.
Thus, we have the following result:
\begin{thm}\label{thm:PosProbEveryG}
For any graph $g$ with shell sequence $s(g)$,  Algorithm~\ref{alg:ShellSeq} produces $g$, up to isomorphism, with positive probability.
\end{thm}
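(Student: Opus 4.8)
The plan is to show that the given labeled graph $g$ arises from a single positive-probability sequence of random choices in Algorithm~\ref{alg:ShellSeq}; since producing a labeled copy of $g$ in particular produces $g$ up to isomorphism, this suffices. First I would fix a convenient labeling of the vertices of $g$: run the peeling procedure (Algorithm~\ref{alg:GetShellSeq}) on $g$ and index the vertices $v_1,\ldots,v_n$ in the order in which they are deleted. Because $s^*$ is nondecreasing during the run, this labeling sorts the shell sequence, $s_1\le\cdots\le s_n$, and by Lemma~\ref{L:sorted sequence} the graph $g$ with this labeling satisfies Condition~\ref{c:sorted sequence}. The algorithm makes only finitely many random choices, so if at every choice point the option agreeing with $g$ lies within the allowed range, then each such option has positive probability and their product---the probability of reproducing $g$---is positive. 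The whole argument thus reduces to verifying, step by step, that the $g$-consistent choice is always legal.

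For the main loop (indices $i\le n-s_n-1$) the $g$-consistent choice of $R$ is the forward neighborhood $\{v_j:j>i,\ v_iv_j\in g\}$. Its size is at most $s_i$ by Condition~\ref{c:sorted sequence}(ii), giving the upper bound $|R|\le s_i$. For the lower bound I would invoke Condition~\ref{c:sorted sequence}(i): it provides at least $s_i$ neighbors $v_j$ of $v_i$ with $s_j\ge s_i$, and these split into those with $j<i$---which, since the sequence is sorted, are exactly the $t_i$ neighbors counted by the algorithm---and those with $j>i$, which are forward neighbors and hence number at most $|R|$. This yields $|R|\ge s_i-t_i$, so the choice meets $\max\{0,s_i-t_i\}\le|R|\le s_i$ and is legal; one then checks that the updates of $t_j$ keep $t_i$ equal to its intended value $|\{v_j:v_jv_i\in g,\ j<i,\ s_j=s_i\}|$ as the loop proceeds.

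The main obstacle is the tail ($i\ge n-s_n$), handled by the two final loops over $S=\{v_j:n-s_n\le j\le n\}$, which consists of the top $s_n+1$ vertices, all of shell index $s_n$ (there are at least $s_n+1$ of these by the realizability condition established in the proof of Proposition~\ref{prop:mleExistence}). Here the algorithm both chooses subsets and force-connects a vertex to the rest of $S$ as soon as its counter $t_j$ reaches $0$, decrementing $t_j$ at each discovered non-adjacency. To show $g$ is reproducible I would maintain the \emph{invariant} that, at every moment, the current value of $t_j$ is at least the number of $g$-non-neighbors of $v_j$ among the vertices still in $S$. The base case follows from Condition~\ref{c:sorted sequence alternative ending}, via the chain $b_j\le a_j\le t_j$ relating the non-neighbors of $v_j$ inside $S$ (call this $b_j$), its top-shell neighbors lying outside $S$ (call this $a_j$), and its initial counter $t_j$. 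The inductive step is a bookkeeping check: choosing $R$ to be the actual $g$-neighbors among the remaining $S$ is legal precisely because of the invariant, and each decrement of $t_j$ is matched by the removal from $S$ of exactly one non-neighbor ($v_i$), so the invariant is preserved; in particular, $t_j=0$ forces the non-neighbor count to $0$, which is exactly the situation in which the force-connect step agrees with $g$. The delicate part is confirming that the two quantities stay in lockstep through the force-connects, which remove vertices adjacent to everything remaining and hence disturb neither side of the invariant.

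Finally, since every random choice along this path is legal and therefore has positive probability, and there are finitely many of them, Algorithm~\ref{alg:ShellSeq} reproduces the chosen labeling of $g$---and hence $g$ up to isomorphism---with positive probability, which is the assertion of Theorem~\ref{thm:PosProbEveryG}.
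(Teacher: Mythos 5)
Your proof is correct and follows essentially the same route as the paper, which relies on Lemma~\ref{L:sorted sequence} together with Conditions~\ref{c:sorted sequence} and~\ref{c:sorted sequence alternative ending} and simply asserts that the algorithm ranges over all choices these conditions permit. Your write-up supplies the details the paper leaves implicit---in particular the deletion-order labeling and the invariant $t_j\ge|\{g\text{-non-neighbors of }v_j\text{ in }S\}|$ justifying the tail phase---and both the legality checks and the bookkeeping are sound.
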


{\scriptsize
\begin{algorithm}[h]
\LinesNumbered
\DontPrintSemicolon
\SetAlgoLined
\SetKwInOut{Input}{input}
\SetKwInOut{Output}{output}
\Input{a sorted integer sequence $s_1\le\ldots\le s_n$}
\Output{a graph $g$ with shell sequence $s(g)=(s_1,\ldots,s_n)$}
\BlankLine
Initialize $v_1,\ldots,v_n$ to be the vertices of $g$.\;
Initialize $t_1=\ldots=t_n=0$\;
\For{$i\leftarrow 1$ \KwTo $n-s_n-1$}{
    Choose a random subset $R$ of $\{v_j:i<j\le n\}$ with $\max\{0,s_i-t_i\}\le |R| \le s_i$\;
    \For{$v_j\in R$}{
        Add the edge $v_iv_j$ to $g$\;
        \lIf{$s_j=s_i$}{$t_j\leftarrow t_j+1$}
    }
}
Initialize $S=\{v_j:n-s_n\le j\le n\}$\;
\For{$v_j\in S$}{
    \If{$t_j=0$}{
        $S\leftarrow S\setminus\{v_j\}$\;
        Add edges from $v_j$ to all $v_k\in S$ in $g$\;
    }
}
\While{$S\not=\emptyset$}{
    Pick any $v_i\in S$\;
    $S\leftarrow S\setminus\{v_i\}$\;
    Choose a random subset $R$ of $S$ with $|R| \ge |S|-t_i$\;
    \For{$v_j\in R$}{
        Add the edge $v_iv_j$ to $g$\;
    }
    \For{$v_j\in S\setminus R$}{
        $t_j\leftarrow t_j-1$\;
        \If{$t_j=0$}{
            $S\leftarrow S\setminus\{v_j\}$\;
            Add edges from $v_j$ to all $v_k\in S$ in $g$\;
        }
    }
}
\caption{Graph Sampler: construct a random graph with a given shell sequence}
\label{alg:ShellSeq}
\end{algorithm}
}

A comment on the running time of Algorithm~\ref{alg:ShellSeq}: Since a random set $R$ can be chosen from a given set $S$ in $O(|S|)$ time,  this algorithm
runs in $O(|V|^2)$ time.

We conclude this section by summarizing  simulation results.
 Algorithm~\ref{alg:ShellSeq}  randomly constructs both labeled graphs (which requires permuting the node labels of the output of the algorithm) and unlabeled graphs with a given shell distribution.
It produces graphs in every isomorphism class of the shell distribution,  and our simulations give preliminary evidence that it also does so quite fast.

As an example, consider
shell distribution $(0,2,1,4,0,0,0)$ on $7$ vertices. For labeled graphs, 10,000 runs of the algorithm produced more than 7,400 distinct graphs, which implies a very high discovery rate of the fiber. For unlabeled graphs, discovering the 12 isomorphism classes requires only 100 calls to the algorithm.

\section{Behavior of complementary statistics on the fiber of the shell ERGM}
 \label{sec:structural}

%I am still working on this and have more to add soon but this is what I have so far.  Please edit/makes suggestions as you see fit.  Dane 5/22

In this section, we explore, both theoretically and experimentally, the behavior of various subgraphs on the fiber of graphs with a given %core
shell distribution.  In the network literature, subgraphs---such as edges and triangles---are used to perform heuristic goodness-of-fit tests.  Hence, understanding how these subgraphs can vary across the set of graphs with a fixed shell %core
  distribution is important.  We present the results in terms of a \emph{sorted shell sequence}, but note that a sorted shell sequence is equivalent to a shell distribution, as one can be obtained from the other uniquely.  %We begin with results on the number of edges.
%\subsection{Edges}
%\sout{
%Let $s_1\le\ldots\le s_n$ be the shorted shell sequence of an observed graph $g$ where each $s_i$ is the shell index of vertex $v_i$.  Let $n_S(g)=(n_0,\ldots,n_{n-1})$ be the corresponding shell distribution, so that $n_j$ is the number of vertices $v_i$ with $s_i=j$.  Suppose degeneracy$(g)=m$, so that $s_n=m$ and $n_j=0$ for $m<j\le n-1$.  We are interested in determining how many edges a graph with the shell sequence $s_1\le\ldots\le s_n$ may have.
%}
%\textcolor{red}{Question: Did we really need all of that notation re-introduced here? wasn't it already used in the paper before and is, by now, familiar to the reader? - SP}
%% NEW TEXT:
The following are  lower and upper bounds on the number of edges and triangles in a graph with a prescribed shell sequence and degeneracy $m$.
\begin{proposition}\label{prop:maxedges}
If $g$ is a graph with sorted shell sequence $s_1\le\ldots\le s_n$, then the maximum number of edges in $g$ is $$\binom{m}{2}+\sum_{i=1}^{n-m}s_i.$$
\end{proposition}

\begin{proof} By Lemma~\ref{L:sorted sequence},
each vertex $v_i$ has at most $s_i$ neighbors $v_j$ with $j>i$,
and the total number of $v_j$ with $j>i$ is $n-i$.
We will construct a graph so that the first bound is realized for $v_i$
with $i\leq n-m$ and the second bound is realized for $i\geq n-m$;
thus, it has the maximum possible number of edges.

Begin with a complete graph $G_0$ on the $m$ highest indexed vertices, $v_{n-m+1},\ldots,v_n$.  Then for each $1\le i\le n-m$, add exactly $s_i$ edges from $v_i$ to $V(G_0)$.  This yields a graph with the desired number of edges.
\end{proof}
%Fix any sorted shell sequence $s_1\leq\ldots\leq s_n$, or equivalently, any shell distribution $(n_0,\ldots,n_k)$, where each $s_i$ is the shell index ofvertex $v_i$ in some graph, where $n_j$ is the number of vertices $v_i$ with
%shell index $s_i=j$, and $k$ is the degeneracy of that graph (which equals the maximum shell index $s_n$).

%We now consider the minimum number of edges in a graph with a prescribed shell sequence.

\begin{proposition}
If $g$ is a graph with sorted shell sequence $s_1\le\ldots\le s_n$ and corresponding shell distribution $n_S(g)=(n_0,\ldots,n_{n-1})$, then the minimum number of edges in $g$ is $$\sum_{j=1}^m f(n_j,j),$$ where
$$
f(n_j,j)=
\begin{cases}
\lceil\frac{jn_j}{2}\rceil & \text{if}\ j<n_j\\
jn_j-\binom{n_j}{2} & \text{if}\ j\ge n_j.
\end{cases}
$$
\label{prop:minedges}
\end{proposition}

\bigskip
%The construction for the maximum number of edges (for a graph with the given shell distribution) is the same as it is for the maximum number of triangles.  In terms of ``Michael's algorithm'', we get the maximum number of edges at every step. (I can prove that this greedy approach yields the global maximum.)
%
%Begin with a complete graph $G_0$ on the $k$ highest indexed vertices, $v_{n-k+1},\ldots,v_n$.  Then for each $1\leq i\leq n-k$, add exactly $s_i$edges from $v_i$ to $V(G_0)$.  The number of edges is:$$ {k\choose 2}+\sum_{i=1}^{n-k}s_i=\sum_{i=1}^{n}s_i -k^2 +{k\choose 2}=\sum_{i=1}^{n}s_i -{k+1\choose 2}=\sum_{j=1}^{k}jn_j -{k+1\choose 2}.$$(The last formula is written in terms of the shell distribution instead of the shell sequence, in case we someday decide we need it.)
\begin{proof}
For any $0\leq i\leq m$, the vertices with shell index $i$ must have at least $i$ neighbors in $\{v_j:s_j\geq i\}$.
We will construct a graph in stages as $j$ goes from $m$ down to $0$, adding vertices with shell index $j$
during stage $j$, using the minimum possible number of edges to satisfy the previous condition.

%We will construct a graph by adding vertices with shell index $j$ in several stages, from $j=m$ down to $j=0$, with the desired shell sequence in $m$ stages, so that the number of edges is minimizes after each stage.
%
First, given any $d<n$, we show how to construct a graph $G(n,d)$ with $n$ vertices, minimum degree $d$, and the fewest possible number of edges.    Let the vertex set be $Z_n$ and arrange the vertices evenly around a circle.  If $d$ is even, make each vertex adjacent to the $d/2$ closest vertices to it on either side.  If $d$ is odd and $n$ is even, make each vertex adjacent to the $(d-1)/2$ closest vertices to it on either side and also to the vertex directly across from it.  If $d$ is odd and $n$ is odd, there is no $d$-regular graph on $n$ vertices, but we can
construct an $n$-vertex graph with one vertex of degree $d+1$ and all other vertices of degree $d$, as follows:

Begin by making each vertex adjacent to the $(d-1)/2$ closest vertices to it on each side.  Then, for $0\le i\le {d-1 \over 2}$, make vertex $i$ adjacent to vertex $i+{d+1 \over 2}$.  Note that for $i={d-1 \over 2}$, we get an edge from vertex ${d-1 \over 2}$ to vertex $d \equiv 0 \bmod n$.  The degree of vertex~0 increases by two and every other vertex degree increases by one, as required.  Note that the number of edges in $G(n,d)$ is $\lceil{nd/2}\rceil$.

Now, start with $G(n_m,m)$, which we can do since $n_m\ge m+1$.  Next we consider $j$ starting from $j=m-1$ down to $j=0$, adding $n_j$ vertices with shell index $j$ at each step as follows:

If $n_j>j$, then we add a disjoint copy of $G(n_j,j)$.  If $n_j\le j$, we add a disjoint complete graph on $n_j$ vertices and, from each of its vertices, add edges to exactly $j-n_j+1$ other vertices (which were added at
earlier steps).

Let $f(n_j,j)$ be the number of edges added in Step $j$. Then $f(n_j,j)=\lceil{jn_j/2}\rceil$ when $j<n_j$ and
$$f(n_j,j)={n_j\choose2} + n_j(j-n_j+1)=jn_j-{n_j\choose2}$$
when $j\ge n_j$.
%If desired, one can check that $$f(n_j,j)=\max\left\{jn_j-{n_j\choose2},\lceil{jn_j/2}\rceil\right\}$$ which is cute but less useful.

The minimum number of edges is thus $\sum_{j=1}^k f(n_j,j)$.

\end{proof}

%\subsection{Triangles}
We now study the behavior of the number of triangles % for graphs with a fixed shell distribution.  We start
starting
with a sharp upper bound.

\begin{proposition} The maximum number of triangles for a graph with sorted shell sequence $s_1\le\ldots\le s_n=m$ is $$\binom{m}{3}+\sum_{i=1}^{n-m}\binom{s_i}{2}.$$
\label{prop:maxtri}
\end{proposition}

\begin{proof}  The construction in the proof of Proposition~\ref{prop:maxedges}  %\sidecomment{missing proposition reference}
produces a graph with the right number of triangles
and the argument is similar.
\end{proof}

Obtaining an explicit lower bound for the number of triangles is difficult.  Instead, we construct graphs with the given shell sequence with relatively few triangles, thus providing an upper bound for the minimum number of triangles for graphs with the specified shell sequence.  The first construction begins with a complete graph on $m$ vertices but then minimizes additional edges added in subsequent steps.

\begin{lm} Let $s_1\le\ldots\le s_n$ be a sorted shell sequence.  Then, there exists a graph $g$ with this shell sequence and exactly $A$ triangles, where $$A = \binom{s_n}{3} + \sum_{i=\max(1,n-2s_n+1)}^{n-s_n}\binom{s_i}{2}.$$
\label{lm:lemmaA}
\end{lm}

\begin{proof}Start with a complete graph on vertices $S_0:=\{ v_i: n-s_n+1\leq i\leq n \} $.
Let $S_1:=\{ v_i: \max(1,n-2s_n+1) \leq i \leq n-s_n \}$ and for each $v_i\in S_1$, add exactly $s_i$ edges from $v_i$ to $S_0$.
Finally, for $1\leq i\leq n-2s_n$, add to the graph a vertex $v_i$ and exactly $s_i$ edges from $v_i$ to $S_1$.
\end{proof}

The idea in the next construction is to grow a (nearly balanced) bipartite graph with partite sets
%\sidecomment{sorry, what are `partite sets'? parts? -SP  {\it The vertex set of a bipartite graph can be partitioned into two independent sets, which are often called `partite sets'. -MJP}}
 $S,S'$ rapidly.
However, it may be impossible to make a bipartite graph, so we maintain another set $S_0$ for the vertices that cannot be placed into $S$ or $S'$.  Every triangle will have at least one vertex in $S_0$.

Fix any sorted shell sequence $s_1\leq\ldots\leq s_n=m$.
If $n_m\ge 2m$, let $S_0=\emptyset$, let $S,S'$ be sets of sizes $\lfloor{n_m/2}\rfloor, \lceil{n_m/2}\rceil$, and let $G$ be the complete bipartite graph with partite sets $S,S'$.

Otherwise, $m+1\leq n_m<2m$.  Let $a_0 = 2s_n - n_m$ and let $a_m = a_m' = n_m - s_n$, then
let $S_0, S, S'$ be vertex sets of sizes $a_0,a_m,a_m'$ respectively.
Initialize $G$ to be the union of a complete graph on $S_0$ and the complete tripartite graph with partite sets $S_0,S,S'$.
Note that $G$ has $n_m$ vertices and minimum degree $s_n$.

Starting with $j = m-1$ and decreasing $j$ after each step, add $n_j$ vertices to $S\cup S'$, split so that that $|S|-|S'|$ is 0 or $\pm 1$.
Make each new vertex in $S$ adjacent to $j$ vertices in $S'$ if $|S'|\ge j$.  Otherwise, make each new vertex in $S$ adjacent to every vertex in $S'$ and also adjacent to $j-|S'|$ vertices in $S_0$; this adds ${j - |S'| \choose 2}$ triangles per new vertex in $S$.  Similarly add $j$ edges from each new vertex of $S'$ to vertices in $S$ if possible or to vertices in $S\cup S_0$ otherwise, which adds ${j - |S| \choose 2}$ triangles per new vertex of $S'$.
Let $B$ be the number of triangles in the graph obtained.

Although we cannot give a simple formula for $B$, we can compute $B$ directly, without actually constructing the graph:
If $n_m\geq 2n$, then $B=0$.  Otherwise, $m+1 \leq n_m < 2m$.  In that case, we use $a_j,a_j'$ to represent the sizes of $S,S'$ after step $j$,
where $j$ is initialized to be $m$ and then decreases after each step.  The number of new vertices in $S,S'$ in each step is represented by $x,x'$.
Then the computation can be performed as follows.
%\textcolor{red}{\\to answer the question from the margin comment: It wasn't clear to me on the previous readings. But I think it is interesting and we should keep it. However, I'm not sure if we should write it in the form of an algorithm, though that would certainly make it look better.  I defer the decision to someone else. -SP}
%\sidecomment{The point of this is that we can compute the bound without constructing the graph, which saves a bit of time (and see comment after the algorithm) - Is that being made clear?  Is it interesting enough to include?  If we keep it, should we rewrite it in the form of an algorithm?  -MJP}

{\scriptsize
\begin{algorithm}
\DontPrintSemicolon
\lIf{$n_m\geq 2n$}{$B\leftarrow 0$\;}
\uElse{
   Initialize $j\leftarrow m$, $a_0 \leftarrow 2 s_n - n_m$, $a_m =a_m' \leftarrow  n_m - s_n$, and
   $B \leftarrow {a_0 \choose 3} + {a_0 \choose 2}(a_m+a_m') + a_0a_ma_m'$\;
   \While{$j > 1$}{
      Let $j \leftarrow j-1$\;
      \lIf{$n_j$ is even}{$x\leftarrow n_j/2$ and $x'\leftarrow n_j/2$\;}
      \lElseIf{$a_{j+1}>a_{j+1}'$}{$x\leftarrow\lfloor{n_j/2}\rfloor$ and $x'\leftarrow\lceil{n_j/2}\rceil$\;}
      \lElse{$x\leftarrow\lceil{n_j/2}\rceil$ and $x'\leftarrow\lfloor{n_j/2}\rfloor$\;}
      $a_j\leftarrow a_{j+1}+x$ and $a_j'\leftarrow a_{j+1}'+x'$\;
      $B \leftarrow B + x {j - a_j' \choose 2} + x' {j - a_j \choose 2}$\ \tcc*[l]{where ${k \choose 2}=0$ whenever $k<2$}
   }
}
\caption{Compute $B$}
\end{algorithm}
}
Moreover, if ever $\min(j-a_j',j-a_j)<2$, then $B$ will remain fixed thereafter,
since $j$ is decreasing and $a_j$ and $a_j'$ are increasing evenly.
Thus, the algorithm can be terminated early if $\min (j-a_j',j-a_j)<2$.

%In the previous line, ${k \choose 2}$ is 0 whenever $k<2$.  Note that $j$ is decreasing and $a_j$ and $a_j'$ are increasing evenly, so $B$ will eventually no longer be increasing at all, at which point we may exit the while loop early and terminate.
%
%\label{lm:lemmaB}

\begin{proposition} Let $s_1\le\ldots\le s_n$ be a sorted shell sequence.  Then, the minimum number of triangles in a graph with this shell sequence is at most $\min\{A,B\}$.
\end{proposition}

\begin{proof} This follows immediately from Lemma~\ref{lm:lemmaA} and the previous construction.
\end{proof}

In order to further understand the behavior of these subgraph counts on the fibers of the model, we simulated graphs using Algorithm~\ref{alg:ShellSeq} with the shell distribution corresponding to the Sampson network studied above.  Here, we summarize the results of those simulations.

Recall that the $4$-truncated shell distribution of the Sampson network is $(0,2,3,13)$.  The network has 35 edges and 14 triangles.  Simulating 50,000 graphs with this shell distribution using Algorithm~\ref{alg:ShellSeq} produced graphs with as many as 41 and as few as 27 edges.  Propositions~\ref{prop:maxedges} and~\ref{prop:minedges} show that the maximum and minimum number of edges for graphs with this shell distribution are 44 and 24, respectively.  The maximum number of triangles among the simulated graphs was 30, and the minimum was 0.  The upper bound for the number of triangles in a graph with this shell distribution, as given by Proposition~\ref{prop:maxtri}, is 34.  The value $A$ in Lemma~\ref{lm:lemmaA} is 0, which coincides exactly with the minimum number of triangles observed in the simulations.

%The network science co-authorship network has shell distribution $n_S(g)=(0,27,87,94,102,23,21,16,
%\\
%9,0,\ldots,0)$, 914 edges and 921 triangles.  Using this shell distribution, 50,000 calls to Algorithm~\ref{alg:ShellSeq} produced graphs with between 1078 and 1178 edges.  According to Propositions~\ref{prop:minedges} and~\ref{prop:maxedges}, the lower and upper bounds number the number of edges for a graph with this shell distribution 659 and 1281, respectively.  For triangles, the maximum among simulated graphs was 500 and the minimum was 258.  Proposition~\ref{prop:maxtri} shows that in this case as many as 1946 triangles are possible, and the construction following Lemma~\ref{lm:lemmaA} shows that there exists a graph with this shell distribution and only 35 triangles.

It is worth noting that, among the 50,000 simulated graphs with shell distribution corresponding to that of the Sampson network, no two were isomorphic.  In other words, 50,000 calls to Algorithm~\ref{alg:ShellSeq} produced 50,000 distinct graphs.
%This was also true in the case of the network science co-authorship graph, a fact which
This again suggests that Algorithm~\ref{alg:ShellSeq} discovers the fiber of graphs with a fixed shell structure at a high rate.

\section{Existence of MLE and the model polytope}
\label{sec:distributionMLE}

It is well known   from the theory of exponential families (e.g., classical text \cite{BR:86})
that the MLE of the natural parameters of the model exists if and only if the average sufficient statistic of the sample lies in the interior of the following convex polyhedron. For discrete exponential families, and ERGMs in particular, \cite{RFZ:mle09}
offer details on the relevance of this polyhedron to the problem of maximum likelihood estimation and study its properties from both theoretical and algorithmic point of view.
\begin{defn}\rm
 The \emph{model polytope} (or \emph{marginal polytope}) for the shell distribution ERGM \eqref{eq:shellERGM} with the sufficient  statistic vector $(n_0(g),\dots,n_{m-1}(g))$ is the convex hull of all possible vectors of minimal sufficient statistics:
 \[
 	\mP_{n,m}=\conv\{(n_0(g),\dots,n_{m-1}(g)) | g\in\G_{n,m}\}\subset \mathbb R^{m}.
 \]
\end{defn}
Of course, each value of $m$ gives rise to a different polytope, but each  turns out to be a subpolytope (in fact, a face, as explained below) of the one with
unrestricted
degeneracy 
$m\leq n-1$.
Thus we define it as a special case and study its geometry first.
For simplicity of notation, denote  the  minimal sufficient statistic vector of the unrestricted model (i.e., the truncated shell distribution) by  $n^*_S(g)=(n_0(g),\ldots,n_{n-2}(g))$.
\begin{defn}\rm
 The model polytope for the shell distribution ERGM  with
 unrestricted
 degeneracy 
 is
 \[
 	\mP_{n}:=
	 \conv\{n_S^*(g)|g\in\G_n\}\subset \mathbb R^{n-1}.
 \]
\end{defn}

Denote by $\bar{n}^*_S$ the \emph{average sufficient statistic}  of the sample $g_1,\ldots,g_N$; its $j^{th}$ entry is $\frac{1}{N}\sum_{j=1}^{N}n_j^*(g_i)$.

\begin{proposition}
\label{prop:mleExistence}
	For a sample of size $N=1$, $\bar{n}^*_S$ never lies in the interior of $\mP_n$; that is, the MLE never exists.
\end{proposition}
\begin{proof}
Determining whether $\bar n^*_S$ lies in the relative interior of $\mP_{n}$ or on its boundary  requires an explicit description of the polytope. We will show that $\mP_n$ is a dilate of a simplex.
To this end, let us consider the polytope of non-truncated shell distributions:
\begin{align*}
	P_n=\conv\{ &(n_0,\dots,n_{n-1}): \\ & (n_0,\dots,n_{n-1})=n_S(g) \text{ for some } g\in\G_n\}.
\end{align*}
We claim that $(n_0,\dots,n_{n-1})=n_S(g)$ for some $g\in\G_n$ if and only if $n_m\ge m+1$ and $\sum n_j=n$, where $m=$ dgen$(g)$.

That $n_m\geq m+1$ is a necessary condition is clear by definition. That it is sufficient, it suffices to construct a graph $g$ with this sequence. But this is straightforward: starting with $K_m$, add $n_m-m$ vertices and connect each of them with every vertex of $K_m$. This gives the $m$-shell. Then, to construct the $j$-shell for all other $j$, simply add as many vertices as are necessary in the shell, and connect each of them with $j$ edges to some subset of the original $K_m$.

Listing all integer points of this polytope, it is not difficult to see that it  is simply an n-dilate of the simplex,
$	    P_n        = \conv\{ ne_i \} = n\Delta_{n-1}\subset \mathbb R^n,
$
where $e_i$ is the $i$-th standard unit vector in $\mathbb R^n$.
Finally, to obtain the polytope $\mP_{n}$ with the truncated sequences, simply omit the last coordinate from $P_n$. The only effect this has on the polytope is that it interprets the simplex $\Delta_{n-1}$ as living in $\mathbb R^{n-1}$, instead of the way it is written above, as a polytope in $\mathbb R^n$ but embedded in the hyperplane $\sum_j n_j=n$.

	Finally, note that all realizable integer points (i.e., those corresponding to a shell distribution) lie on the boundary of this polytope, and not its relative interior, since any realizable integer point must have a 0 in some component, as is evident from the necessary and sufficient conditions for shell distribution realizability given above. Thus, the MLE never exists for a single observation $g$.
\end{proof}
\begin{rmk}\rm
In case of larger samples, the MLE may or may not exist.
The decision requires checking if the average sufficient statistic is on the boundary of $\mP_n$.
\end{rmk}

We have shown that the polytope for
unrestricted
 degeneracy model, $\mP_n$, is just a dilate of the simplex, and all of the realizable sufficient statistics lie on its boundary. But the simple structure of $\mP_n$ also implies that  $\overline{\mP_{n,m}}\subset\mP_n$ for each $m\leq n-1$, where $\overline{\mP_{n,m}}$ denotes the embedding of $\mP_{n,m}$ into $\mathbb R^{n-1}$. Indeed, any point $p\in\mP_{n,m}\subset \mathbb R^{m}$ corresponds to a point $\overline{p}\in\mathbb R^{n-1}$ which is clearly a realizable shell distribution vector. Thus $\overline{p}$ is  a point in the polytope $\mP_n$ that lies on the face cut out by the equations that set all coordinates   other than $m$-th
 to zero.

\begin{rmk}
	Setting the degeneracy parameter $m$ to be equal to the observed graph and using the corresponding ERGM \eqref{eq:shellERGM} with sample space $\mathcal G_{n,m}$ behaves better than using
unrestricted degeneracy
 $m\leq n-1$ in general. In particular, many of the points that lie on the boundary of $\mP_n$ lie on the relative interior of a face of some $\mP_{n,m}$, thus the MLE has a positive probability of existing. The asymptotics of this construction are of interest to the behavior of the MLE problem, but are beyond the scope of the present paper.
\end{rmk}

\section{Discussion}
Cores have been widely used to study and summarize networks. In this paper we study the core decomposition of a network with an eye towards statistical inference. We embed the core structure of a network as captured by its shell distribution in the exponential random graph framework. We examine the theoretical properties of the model and study the problem of inference in the model which boils down to three tasks--existence of the MLE, sampling from the model and sampling from the fiber. The existence of MLE question is answered by characterizing the model polytope. To enable maximum likelihood estimation, we introduce a new type of support restriction that avoids bad behavior of the model common to many other classes of ERGMs. We develop an MCMC algorithm to sample from the model and apply this algorithm to estimate the MLE and perform heuristic goodness-of-fit tests. We also study the fiber which is the space of all graphs given a fixed shell distribution and develop a sampling algorithm that can generate any graph with a predefined core structure with positive probability. Further, we describe the fiber in detail by computing bounds on subgraph counts induced by fixing the core structure of a network.

Our experiments and theoretical results indicate that the shell distribution model captures information beyond the degree distribution and, in particular, the triadic effects quite well. 
The model support is obtained by conditioning on the degeneracy of a graph. Conditioning is common in ERGMs, as it improves model properties and stability of estimation algorithms. The choice of degeneracy and thus the specific shell ERGM depends on the data and is meant to provide a way to improve not only the model's stability, but also its interpretability.

\smallskip

There are several interesting extensions of this work worth pursuing. Inference in the shell distribution ERGM gives rise to several important problems that deserve attention. Firstly, even though the shell distribution of a network can be computed in linear time, when embedded in an MCMC algorithm to compute change statistics, this process is very slow. 
In contrast, the change statistics of most ERGMs can be computed locally, without the need of recomputing the new sufficient statistic of the entire graph. 
A natural question to ask is if one can compute the change statistics of the shell distribution more efficiently. In particular,  the following is of critical interest: is there a way to use the local change in the network, such as adding or deleting edges, to re-compute the shell distribution? 

A related question is on the proposal distribution used in the MCMC algorithm. Since we restrict the support of the model to graphs with degeneracy equal to $m$, it would be useful to find proposal distributions that generate networks that are always in this set.  We considered one type of summary statistic of the core distribution, namely the shell distribution and studied the associated ERGM thoroughly. Other interesting ways to summarize the core structure can be used to develop ERGMs. As mentioned, ERGMs based on the core distribution go beyond the dyadic assumption that is inherent in the degree-based analysis. An interesting summary statistic to consider is the degree of a node in its core. 

In a different direction, for many datasets, including the Sampson dataset, the network in question is directed. 
Notions of core decomposition can be defined for such generalizations of graphs as well: for example, the $(k,l)$-core of~\cite{DCores-Giatsidis2013} for directed graphs. It is not difficult to extend our model and algorithms to this notion of core decomposition, and it would be interesting to see how that model would perform.

  Finally, the support restriction applied to the core ERGM may be useful in other contexts, but a natural question to ask is how does one select the degeneracy parameter $m$. 

\section*{Acknowledgements}
The authors would like to thank Stephen Fienberg and Alessandro Rinaldo for several useful discussions on this topic and the  anonymous reviewers for their careful reading of our paper and their suggestions for clarifications. Petrovi\'c, Stasi and Wilburne were  supported by AFOSR/DARPA grant FA9550-14-1-0141. Karwa gratefully acknowledges support by a grant from the Singapore National Research Foundation under the Interactive and Digital Media Programme Office to the Living Analytics Research Centre.

\bibliographystyle{plainnat}
\bibliography{kCoresERGM}

\section{Appendix A} 
\label{sec:OldModel}

This appendix deals with the case when graph degeneracy $m$ is not restricted to one value for all graphs under the model. In other words,  the unrestricted model gives positive   probability to networks of degeneracy \emph{less than or equal to} any fixed value of $m\leq n-1$.
For simplicity, we will refer to this as the \emph{unrestricted} model, motivated by  the sample space restrictions placed in defining the core distribution ERGM in Section \ref{sec:distributionModel}.
 We will see that the choice of any particular such $m\leq n-1$ does not affect the behavior of the model; instead, problems arise when  allowing degeneracy to vary within the graphs in the model.
Section~\ref{sec:appendixModel} introduces the unrestricted model, which is ill-behaved (cf.\ Remark~\ref{rmk:degen}).  Section~\ref{sec:appendixDegenerate} explains this behavior.

\subsection{The model with unrestricted degeneracy}
\label{sec:appendixModel}
For completeness, let us  re-derive the model,  from first principles, for the unrestricted case $m \leq n-1$, for which the sample space is the set of all graphs with $n$ nodes, $\G_n$.

Again, to take advantage of the theory of exponential families, we rewrite Equation~\ref{eqn:graphprob} in exponential family form by
  re-parameterizing \mbox{$P(G=g)$} in terms of normalized probabilities $\tilde{p}_j=\frac{p_j}{p_{n-1}}$.
(Our notation very closely follows \cite{KayvanAleDK}.)

  Observe that $p_{n-1} =\frac{1}{1+\sum_{j=1}^{n-2}\tilde{p}_j}$, and thus  $P(G=g)$  can be written as
\begin{equation*}
P(G=g) = \varphi(p)\prod_{j=0}^{n-1}(\tilde{p}_jp_{n-1})^{n_j(g)}= \varphi(p)p_{n-1}^{\sum_{j=0}^{n-1}n_j(G)}\prod_{j=0}^{n-1}\tilde{p}_j^{n_j(g)} = \varphi(p)p^{n}_{n-1}\prod_{j=0}^{n-1}\tilde{p}_j^{n_j(g)},
\end{equation*}
or, more compactly, using that $\tilde{p}_{n-1}=1$ and  renaming the constant $\varphi(p)$ to $\phi(\tilde p)$ to reflect the re-parametrization:
\begin{align}\label{eq:Prob(G)}
P(G=g)=\frac{\phi(\tilde p)}{\left(1+\sum_{j=1}^{n-2}\tilde{p}_j\right)^n}\prod_{j=0}^{n-2}\tilde{p}_j^{n_j(g)}.
\end{align}
 Next,  let $\theta_j=\log\tilde p_j$ and define the normalizing constant in terms of $\theta$ as
$\psi(\theta)=n\log(1+\sum_{j=0}^{n-2}\exp(\theta_j))-\log(\phi(\tilde p)).$
With this, we can write $P(G=g)$ in exponential family form:
\begin{align}
P(G=g)=\exp\left\{\sum_{j=0}^{n-2}n_j(g)\theta_j-\psi(\theta)\right\}.
\end{align}
For this version of the model, the minimal sufficient statistic is given by the truncated shell distribution $n^*_S(g)=(n_0(g),\ldots,n_{n-2}(g))$.  As before,  it is not difficult to see that the natural parameter space $\Theta$ for the model is $\Theta=\mathbb{R}^{n-1}$.
%\sidecomment{used to say $N$ but we fixed to $n-1$; correct, yes?}
%  Thus, the model defines an algebraic map $\Theta\rightarrow\Delta_n$.

To obtain the log-partition function $\psi(\theta)$ in closed form, for fixed $n$,
consider the set of graphs on $n$ nodes as an ordered list, $\G_n=\{g_1=K_n,\ldots,g_i,\ldots,g_M=\bar{K}_n\}$, where the graphs are listed in non-increasing order in terms of the number of edges, and where $M=2^{\binom{n}{2}}$.
Note that in the empty graph $g_M$, every vertex has shell index $0$, while in the complete graph $g_1=K_n$,  the shell indices are $s(v)=n-1$ for all $v\in V(K_n)$. Therefore,
\begin{align}\label{eq:Prob(G=Kn)}
P(G=g_M)=\frac{\phi(\tilde p)}{\left(1+\sum_{j=0}^{n-2}\tilde{p}_j\right)^n}\cdot\tilde{p}_0^n,
\end{align}
and
\begin{align}
P(G=g_1) = \frac{\phi(\tilde p)}{\left(1+\sum_{j=0}^{n-2}\tilde{p}_j\right)^n}.
\end{align}
For any other arbitrary graph $g_i\in\G_n\setminus \{\bar{K}_n,K_n\}$,
\begin{align}\label{eq:Prob(G=gi)}
P(G=g_i)=\frac{\phi(\tilde p)}{\left(1+\sum_{j=1}^{n-2}\tilde{p}_j\right)^n}\prod_{j=0}^{n-2}\tilde{p}_j^{n_j(g_i)}.
\end{align}
Using $\sum_{i=1}^MP(G=g_i)=1$ and Equations~\eqref{eq:Prob(G=Kn)} and \eqref{eq:Prob(G=gi)}, the normalizing constant $\phi(\tilde p)$ can be rewritten as:
\begin{align}
\phi(\tilde p)=\frac{\left(1+\sum_{j=0}^{n-2}\tilde{p}_j\right)^n}{1+\ldots+\prod_{j=0}^{n-2}\tilde{p}_j^{n_j(g_i)}+\ldots+\tilde p_0^n}.
\end{align}
Finally, $\theta_j=\log\tilde p_j$ and the second equality in \eqref{eq:Prob(G=Kn)} provide
$
\psi(\theta)=\log(1+\ldots+\prod_{j=0}^{n-2}\tilde{p}_j^{n_j(g_i)}+\ldots+\tilde p_0^n) =
 \log(1+\ldots+e^{\sum_{j=0}^{n-2}n_j(g_i)\theta_j}+\ldots+e^{n\theta_0})
$.

\begin{example}\rm
Determining $\psi(\theta)$ for the case $n=3$ depends on counting simple graphs on three nodes up to isomorphism. Namely, there are $4$ non-isomorphic simple graphs on $3$ vertices (see Figure~\ref{fig:n=3case}): $\G_n$ consists of 1 copy of $g_1$, 3 isomorphic copies of $g_2$, 3 isomorphic copies of $g_3$ and 1 copy of $g_4$.  For $g_1=K_3$, each vertex has shell index 2, so $n_S^*(g_1)=(0,0)$.  For $g_2$, each vertex has shell index 1 and therefore $n_S^*(g_2)=(0,3)$.  Two vertices of $g_3$ have shell index 1 while the remaining vertex has shell index 0, so $n_S^*(g_3)=(1,2)$,  and $n_S^*(g_4)=(3,0)$ as every vertex of $g_4=\bar{K}_3$ has shell index 0.
Therefore,
the log-partition function for $n=3$ is
$
	\psi(\theta)=\log(1+3\tilde{p}_1^3+3\tilde{p}_0\tilde{p}_1^2+\tilde{p}_0^3)=  \log(1+3e^{3\theta_1}+3e^{2\theta_1+\theta_0}+e^{3\theta_0}).
$

\begin{figure}
\begin{subfigure}{.22\textwidth}
\begin{center}
\begin{tikzpicture}
  [scale=.4,auto=center,every node/.style={circle,fill=black,inner sep=1.5pt}]
  \path[use as bounding box] (0,-1) rectangle (4,4);
  \node (n1) at (0,0)  {};
  \node (n2) at (4,0)  {};
  \node (n3) at (2,2)  {};

  \foreach \from/\to/\weight in {n1/n2/1,n2/n3/1,n1/n3/1}
    \draw (\from) --(\to);

\end{tikzpicture}
\\
$n_S^*(g_1)=(0,0)$
\end{center}
\end{subfigure}
\begin{subfigure}{.22\textwidth}
\begin{center}
\begin{tikzpicture}
  [scale=.4,auto=center,every node/.style={circle,fill=black,inner sep=1.5pt}]
  \path[use as bounding box] (0,-1) rectangle (4,4);
  \node (n1) at (0,0)  {};
  \node (n2) at (4,0)  {};
  \node (n3) at (2,2)  {};

  \foreach \from/\to/\weight in {n2/n3/1,n1/n3/1}
    \draw (\from) --(\to);

\end{tikzpicture}
\\
$n_S^*(g_2)=(0,3)$
\end{center}
\end{subfigure}
%\smallskip
\begin{subfigure}{.22\textwidth}
\begin{center}
\begin{tikzpicture}
  [scale=.4,auto=center,every node/.style={circle,fill=black,inner sep=1.5pt}]
  \path[use as bounding box] (0,-1) rectangle (4,4);
  \node (n1) at (0,0)  {};
  \node (n2) at (4,0)  {};
  \node (n3) at (2,2)  {};

  \foreach \from/\to/\weight in {n2/n3/1}
    \draw (\from) --(\to);

\end{tikzpicture}
\\
$n_S^*(g_3)=(1,2)$
\end{center}
\end{subfigure}
\begin{subfigure}{.22\textwidth}
\begin{center}
\begin{tikzpicture}
  [scale=.4,auto=center,every node/.style={circle,fill=black,inner sep=1.5pt}]
  \path[use as bounding box] (0,-1) rectangle (4,4);
  \node (n1) at (0,0)  {};
  \node (n2) at (4,0)  {};
  \node (n3) at (2,2)  {};

  \foreach \from/\to/\weight in {}
    \draw (\from) --(\to);

\end{tikzpicture}
\\
\footnotesize{$ n_S^*(g_4)=(3,0)$}
\end{center}
\end{subfigure}
\caption{Truncated shell distributions of all non-isomorphic simple graphs on $3$ vertices.} \label{fig:n=3case}
\end{figure}
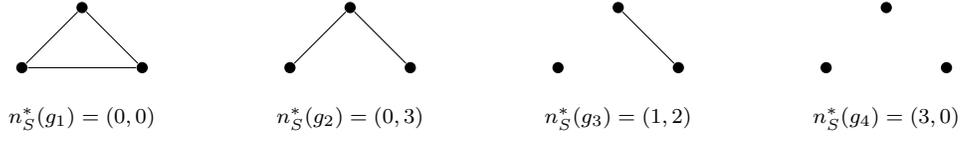
\end{example}

\subsection{Bad behavior of the unrestricted model}\label{sec:appendixDegenerate}

In this subsection we illustrate how the model misbehaves if the degeneracy $m$ is not controlled. 
The model with unrestricted degeneracy  allows,  for any fixed $m$,  the support of the model to contain graphs with degeneracy less than or equal to $m$, i.e. the sample space of the model is defined as follows: 
\[\G_{n,\leq m} =\{g\in\G_n: \dgen(g) \leq  m\}.\] 
Note that a special case is when $\G_{n,\leq n-1}=\G_n$, that is, a graph with \emph{any} degeneracy is allowed with positive probability under the model.

If we allow the model to put positive mass on graphs with degeneracy less than or equal to $m$, then for any generic point in the parameter space $\Theta$, the following behavior occurs. The likelihood function has many modes, and the local modes of the model corresponding to graphs where all nodes lie in the shells that are most popular (with respect to the $m^{th}$ shell). The example below illustrates this point, followed by Lemma~\ref{lemma:bad} that makes this intuitive explanation of the model  behavior precise.

%\paragraph{Example} 
\begin{example}\rm\label{ex:unrestrictedERGMbad}
Let $m = 4$ and consider the unrestricted shell ERGM supported on the sample space $\G_{n,\leq 4}$, i.e. the model puts a positive mass on all graphs with degeneracy less than or equal to $4$. Let $\theta = (\theta_0,\dots,\theta_4)$ 
%\}_{i=0}^4$ 
 be a parameter vector of this model. Recall that $\theta_i = \log  \frac{p_i}{p_m}$ and hence $\theta_4 = 0$. Without significant loss of generality, let us assume that $\theta_3 >\theta_0, \theta_1,\theta_2$. 
% REMOVED ()  !! 
 Hence amongst shells $0$, $1$, $2$ and $3$, the $3^{rd}$ shell has the highest  attractiveness,
  relative to the $4^{th}$ shell. Consider the set of  graphs  whose degeneracy is less than $m=4$, i.e. $\G_{n,\leq 3}$. Let $g$ be any graph in $\G_{n,\leq 3}$, then $n_s(g) = (n_0(g),n_1(g),n_2(g),n_3(g),0)$. Let $g^*$ be  any graph in $\G_{n,\leq 3}$, where all nodes lie in the shell $3$, which is the most attractive shell, i.e., $n_s(g^*) = (0,0,0,n,0)$. 
  
  Then $P(g^*) > P(g)$.  
%\begin{proof} 
Indeed, the following inequalities are straightforward: 
	\begin{align*}
	\log \frac{P(g^*)}{P(g)} &= \sum_{i=0}^{m-1}{\theta_i(n_i(g^*) - n_i(g)}) \\
	&= -\sum_{i=0}^{m-2}{\theta_i n_i(g)} + \theta_{m-1}\left(n-n_{m-1}(g)\right) \\
	&= -\sum_{i=0}^{m-2}{\theta_i n_i(g)} + \theta_{m-1}\left(\sum_{i=0}^{m-2}{n_i(g)}\right) \\
	&= \sum_{i=0}^{m-2}{n_i(g)(\theta_{m-1} - \theta_i)} > 0.
	\end{align*}
%\end{proof}
This should be interpreted as follows: Among the set of all graphs with degeneracy less than or equal to $3$, the most likely graph will be such that all nodes are in the shell index corresponding to the largest $\theta$. Thus, in some sense, the local mode is a ``degenerate'' mode (no pun intended!). 
  \end{example}
 
 In the above example, we could have chosen any $\theta_k$,  $k\neq m$, to be the most attractive shell, and the shell distribution of $g^*$ should be modified accordingly, i.e. $n_k(g^*) = n$ and $n_i(g^*) = 0$ for all $i \neq k$. Moreover, we could have considered the mode over any restricted sample space, not just $\G_{n,\leq 3}$. Lemma \ref{lemma:bad} illustrates this point by generalizing the example in several directions, in particular, by allowing there to be more than one `popular' shell. Let $m$ be the degeneracy of the model, let $\theta = (\theta_0,\dots,\theta_{m-1})$ be the parameter vector of the shell ERGM. Define $[m] = \{0,1, \ldots, m-1\}$. 
\begin{lm}
	\label{lemma:bad}
	Consider the shell ERGM on the sample space $\mathcal{G}_{n, \leq m}$ with parameter vector $(\theta_0,\dots,\theta_{m})$,
	%$\{\theta_i \}_{i=0}^{m}$,
	 where $\theta_m=0$ by definition. Let  $g$ be any graph in $\mathcal{G}_{n,\leq d}$ with degeneracy $d < m$, i.e., $n_i(g) = 0$ for all $i > d$. Let $\mathcal{L}_d = \{l \in [d] : \theta_l = \max_{i \in [d]} \theta_i\}$. Let $\mathcal{L}_d^c = [d] \backslash \mathcal{L}_d$.  Let $g^*$ be any network with degeneracy $d$ such that nodes exist only in the most popular shells, i.e. $n_i(g^*) =0$ for all $i \notin \mathcal{L}_d$. 
	 
	 Then, $P(g^*) > P(g)$.
\end{lm}
\begin{proof}
	Let $\theta^* = \max_{i \in [d]} \theta_i$, and consider the following, as in Example~\ref{ex:unrestrictedERGMbad}: 
%\sidecomment{Still to do: replace all $\forall$ with `for all'.}
	\begin{align*}
	\log \frac{P(g^*)}{P(g)} &= \sum_{i\in [d]}{\theta_i(n_i(g^*) - n_i(g)}) \\
	&= \sum_{i \in \mathcal{L}_d^c}\theta_i(0-n_i(g)) + \sum_{i \in \mathcal{L}_d}{\theta_i\left(n_i(g^*) - n_i(g)\right)} \\
	&= - \sum_{i \in \mathcal{L}_d^c}\theta_i n_i(g) + \theta^* \sum_{i \in \mathcal{L}_d}{(n_i(g^*) - n_i(g))} \\
	&= - \sum_{i \in \mathcal{L}_d^c}\theta_i n_i(g) + \theta^* \left(n - \sum_{i \in \mathcal{L}_d}{n_i(g)}\right)  \\ 	
	&= - \sum_{i \in \mathcal{L}_d^c}\theta_i n_i(g) + \theta^* \left(\sum_{i \in \mathcal{L}_d^c}{n_i(g)}\right) 
	%\mbox{( Since }\sum_{i \in [d]}n_i(g_1) = n  )
	\\
	&= \sum_{i \in \mathcal{L}_d^c}{n_i(g)(\theta^* - \theta_i)} > 0.
	\end{align*}
	The fourth equality holds since $ n_i(g^*) = 0$ for all $i \in \mathcal{L}_d^c$. The fifth equality holds because $\sum_{i \in [d]}n_i(g) = n$. 
\end{proof}
As an additional example of behavior explained in  Lemma~\ref{lemma:bad}, let $m=5$, $d=3$ and let $\theta = (a,\alpha,b,\alpha, c,0)$ where $\alpha > a,b,c$.
%$\alpha > (a,b,c)$ makes no mathematical sense. 
By Lemma \ref{lemma:bad}, among all graphs with degeneracy at most $3$, graphs with shell distribution $(0,k,0,n-k,0,0)$ are the modes, where $n-k \geq 4$. Thus, among $d$-degenerate graphs, only graphs where all nodes lie in the most popular shells are modes. These graphs are vastly different from each other in terms of their topological properties (e.g. density, number of triangles), yet they occur as modes of the same parameter vector. 

The reason why such a behavior occurs is that allowing graphs with degeneracy less than $m$ introduces a linear constraint on the shell distributions of these graphs. Thus to eliminate such a behavior, we define the model so that any graph with degeneracy less than $m$ has $0$ probability. Two consequences of this fact are that when fitting the shell ERGM to an observed graph, (1) $m$ cannot be larger than the observed degeneracy, and (2) graphs with degeneracy less than the observed degeneracy have $0$ probability. 

To see why  (1) holds, let $g$ be an observed graph with shell distribution $n_s(g)$ and degeneracy $\hat{m}$. Consider fitting the shell ERGM to $g$ by allowing $m >  \hat{m}$. If the sample space is $\mathcal{G}_{n,m}$, the observed graph has $0$ probability under the model! On the other hand, if we let the sample space be $\mathcal{G}_{n,\leq m}$ and we have $\hat{m} < m$, the observed network lies in the set $\mathcal{G}_{n, \leq \hat{m}} \subsetneq \mathcal{G}_{n, \leq m}$. Lemma \ref{lemma:bad} can be applied to show that the model has an undesirable property.  Let $supp(n_S) = \{i \in [m]: n_i(g)\neq 0\}$. Let $\Theta_g$ be a subset of the parameter space such that indices of largest value of $\theta$ correspond to $supp(n_S)$, i.e.,

$$\Theta_g = \{ \theta \in \Theta: \forall s \in supp(n_S), \theta_s = \max_{i \in [m]} \theta_i \}$$
By Lemma \ref{lemma:bad}, any parameter in $\Theta_g$ will have the observed graph $g$ as one of its modes. Moreover, these models will have several other modes that have shell distributions quite different from the observed graph. 

The above discussion shows that if we allow $m > \hat{m}$, there exist a large subset of the parameter space where the model misbehaves. A natural question to ask is the converse -  does there exists a parameter vector for which the observed graph is the only mode? An easy algebraic calculation in the example below shows even a weaker requirement of having the model assign higher mass to graphs with shell distributions vastly different from the observed shell distribution is not possible.

%\paragraph{Example} 
\begin{example}\rm
Let the observed shell distribution be $n_S(g) = (0,k,0,n-k)$, with $n-k \geq 4$ and $k > 0$. Hence the observed degeneracy is $\hat{m}=3$. Consider the  shell ERGM with $m=3$ and sample space $\mathcal{G}_{n,\leq 4}$. Consider two graphs $g_1$ and $g_2$ with shell distributions $(0,0,0,n,0)$ and $(0,n,0,0)$. We will show that there does not exist any point in the parameter space such that $P(g)> P(g_1)$ and $P(g) > P(g_2)$ simultaneously. To this end, let $\theta = (\theta_0,\theta_1,\theta_2,0)$ be any point in the parameter space. Note that 
	$\log \frac{P(g)}{P(g_1)} =  (\theta_1 - \theta_3)k$
and 	$\log \frac{P(g)}{P(g_2)} =  (\theta_3 - \theta_1)(n-k)$. For both these terms to be positive at the same time, we need $\theta_1 > \theta_3$ and $\theta_3 > \theta_1$ which is impossible. Moreover if $\theta_1 = \theta_3$, then the model places equal probability on the observed graph $g$ and $g_1$ and $g_2$, which is undesirable.
\end{example}

\end{document}